\documentclass[11pt,reqno]{amsart}
\usepackage[T1]{fontenc}
\usepackage{lmodern}
\usepackage{amsmath,amssymb,amsthm,mathtools,mathrsfs}
\usepackage[margin=1in]{geometry}
\usepackage{booktabs,array,tabularx}
\usepackage{float}
\usepackage[expansion=false]{microtype}
\usepackage[colorlinks=true,linkcolor=blue,citecolor=blue,urlcolor=blue]{hyperref}
\hypersetup{pdftitle={Two-term dilogarithms, mixed-base Nahm sums, and Fricke symmetry},
 pdfauthor={Cetin Hakimoglu-Brown},
 pdfsubject={Rogers dilogarithms, Nahm sums, and modular product transformations}}
\newtheorem{theorem}{Theorem}[section]
\newtheorem{lemma}[theorem]{Lemma}
\newtheorem{proposition}[theorem]{Proposition}
\newtheorem{corollary}[theorem]{Corollary}
\newtheorem{conjecture}[theorem]{Conjecture}
\theoremstyle{remark}

\numberwithin{equation}{section}
\newcommand{\HH}{\mathbb H}
\newcommand{\QQ}{\mathbb Q}
\newcommand{\ZZ}{\mathbb Z}

\newcommand{\RR}{\mathbb R}

\DeclareMathOperator{\Li}{Li}

\DeclareMathOperator{\FT}{FT}
\title[Two-term dilogarithms and modular products]{Two-term dilogarithms, mixed-base Nahm sums, and Fricke symmetry}
\author{Cetin Hakimoglu-Brown}
\date{}
\subjclass[2020]{Primary 11F20, 33B30; Secondary 11P84, 11F27, 11R16}
\keywords{Rogers dilogarithm, symmetrizable Nahm sum, modular unit, Fricke involution, Gaussian period, hyperelliptic curve}
\begin{document}
\begin{abstract}
We study the arithmetic constraints that two-term Rogers dilogarithm
relations impose on rank-two Nahm systems with mixed denominator steps.
Monomial complement equations give a uniform conversion to quadratic
data in every algebraic degree. Within a specified ten-record
simplest-cubic family, positive coupling, positive definiteness, and
integer-valued exponents leave, for $m>2$, exactly the steps $(1,3)$,
$(1,13)$, and $(1,31)$. At the smallest compatible modulus $31$, the
required support cannot be a multiplicative subgroup coset. Quadratic
examples include known product identities and two systems for which
the first radial correction excludes every individual modular sum with
rational linear terms. For the index-$13$ system, we prove a two-term
Rogers identity and recover its arguments from the matrix of a
previously proved two-dimensional Fricke transformation. Its cusp
growth agrees with the resulting saddle action, and an independent
calculation reproduces all three leading Fricke amplitudes exactly. The associated
two-summand product evaluations remain conjectural; the small rank and
dimension contrast with the classical rank-five, six-component
Andrews--Gordon construction at the same modulus.  A mixed-signature cubic family illustrates the limits
of the arithmetic restriction.
\end{abstract}
\maketitle

\section{Introduction}\label{sec:intro}
The classical Andrews--Gordon identities extend the Rogers--Ramanujan
identities to every odd modulus $M=2k+1\geq5$ \cite{Andrews}. Their standard
sum representations have rank $k-1$, and their normalized modular vector
has $k$ components \cite[Section~1.3]{Ono}. Thus the rank and dimension
grow as $(M-3)/2$ and $(M-1)/2$: at modulus $13$ they are five and six.
The central example here has a two-dimensional Fricke-stable product
space and an associated rank-two saddle system. Its proposed product
evaluations use two summands, each with only two summation variables.
The product transformation was proved independently of those conjectures
in \cite{FrickeCompanion}.
This small rank and dimension at level $13$ motivate the connection
between the modular geometry and the cubic dilogarithm arithmetic.

For a rank-two Nahm sum, the leading radial growth is a weighted sum of
two Rogers dilogarithms. We use such relations to restrict the algebraic
complement equations before testing quadratic forms and linear terms. Put
$L(x)=\operatorname{Li}_2(x)+\tfrac12\log x\log(1-x)$ for $0<x<1$.
A relation $mL(x)+L(y)\in\QQ\pi^2$ is useful input, but its conversion
to a Nahm matrix also requires explicit monomial complement equations.
The conversion is independent of algebraic degree. Our rational and
quadratic comparisons include both known product evaluations and two
systems for which the first radial correction excludes every individual
modular sum with rational linear terms
(Theorem~\ref{thm:quadratic-obstruction}).

Our main arithmetic restriction is Theorem~\ref{thm:three-matrix}.
Within the ten normalized simplest-cubic records of the companion
classification \cite{CubicClassification}, the requirements of positive
coupling, positive definiteness, and integral exponents leave, for $m>2$,
only the indices $3$, $13$, and $31$. This is an exact finite-family
statement. The companion manuscript leaves a second totally real cubic
family in a finite unverified region, and no higher-degree classification
is assumed here. The denominator index, the conductor of the saddle
field, and the modulus of a possible product must be kept distinct.

The index-$13$ row is especially compatible with a modular product:
its density requires four supported residues at modulus $13$, and the
cubic-residue cosets supply exactly three supports of that size. The
Fricke theorem of \cite{FrickeCompanion} establishes a two-dimensional
transformation law for the resulting products. We identify its matrix
coefficients with the dilogarithmic saddle and compare the radial
actions. This explains a concrete structural compatibility at $13$
without claiming that it is the last possible modulus.

We recall the Fricke theorem from \cite[Theorem~1.1]{FrickeCompanion}
and refer there for its complete proof. The present paper develops the
associated rank-two Nahm system, the arithmetic sieve, the saddle
action, and the quadratic obstructions. All proofs of these arithmetic
results are included below.

Put $q=e^{2\pi i\tau}$ for $\tau\in\HH$, and write
\[
 (a_1,\ldots,a_k;q)_\infty
 =\prod_{i=1}^k\prod_{n\geq0}(1-a_iq^n).
\]
All fractional powers mean $q^r=e^{2\pi i r\tau}$. The cubic residues modulo
$13$ and their two cosets are
\begin{equation}\label{eq:cosets}
 C_0=\{\pm1,\pm5\},\qquad C_1=\{\pm2,\pm3\},\qquad
 C_2=\{\pm4,\pm6\}.
\end{equation}
Define
\begin{equation}\label{eq:products}
\begin{aligned}
 P_0(q)&=(q,q^5,q^8,q^{12};q^{13})_\infty^{-1},\\
 P_1(q)&=(q^2,q^3,q^{10},q^{11};q^{13})_\infty^{-1},\\
 P_2(q)&=(q^4,q^6,q^7,q^9;q^{13})_\infty^{-1}.
\end{aligned}
\end{equation}
For $\zeta=e^{2\pi i/13}$, let
\begin{equation}\label{eq:perioddefs}
 \eta_j=\sum_{a\in C_j}\zeta^a,\qquad
 \alpha=\eta_1-\eta_2,\quad \beta=\eta_0-\eta_2,\quad
 \gamma=\eta_1-\eta_0.
\end{equation}
We will use the product relations \cite[Lemma~2.1]{FrickeCompanion}
\begin{equation}\label{eq:theta}
 P_0=P_1+qP_2,\qquad
 P_0P_1P_2=\frac{(q^{13};q^{13})_\infty}{(q;q)_\infty}.
\end{equation}
The first reduces the three normalized products to two components.

\begin{theorem}[{\cite[Theorem~1.1]{FrickeCompanion}}]\label{thm:main}
For every $\tau\in\HH$, the vector
\[
 \mathbf H(\tau)=
 \begin{pmatrix}q^{-1/6}P_0(q)\\q^{-1/6}P_1(q)\end{pmatrix}
\]
satisfies
\begin{equation}\label{eq:main}
 \mathbf H\!\left(-\frac1{13\tau}\right)=S_{13}\mathbf H(\tau),
 \qquad
 S_{13}=\frac1{\sqrt{13}}
 \begin{pmatrix}\gamma&\beta\\\alpha&-\gamma\end{pmatrix}.
\end{equation}
Moreover, $S_{13}^2=I$ and
$\mathbf H(\tau+1)=e^{-\pi i/3}\mathbf H(\tau)$.
\end{theorem}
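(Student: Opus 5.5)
The plan is to write each product as a quotient of a Jacobi theta function by the Dedekind eta function, and then read off the Fricke law from the modular inversion of both. For a residue $a$ modulo $13$, Jacobi's triple product gives
\[
 (q^a,q^{13-a},q^{13};q^{13})_\infty=\sum_{n\in\ZZ}(-1)^n q^{13n(n-1)/2+an}.
\]
Hence each $P_j$, multiplied by $(q^{13};q^{13})_\infty^{2}$, becomes a product of two such theta series. This product is awkward, so instead I would use the second relation in \eqref{eq:theta}. Writing $P_j$ as (weight-zero quotient)$\times$($\eta$-quotient), we have $P_0P_1P_2=\eta(13\tau)/\eta(\tau)$ up to a power of $q$. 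This suggests that each $P_j$ equals $q^{\text{const}}\,\theta_j(\tau)/\eta(\tau)$, where $\theta_j$ is a weight-$\tfrac12$ theta series attached to a rank-two lattice, namely a binary quadratic form of discriminant $-13\cdot 3$ or similar.

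Concretely, I would first check numerically, and then prove by the quintuple product identity or a theta dissection, that
\[
 q^{-1/6}P_j(q)=\frac{\Theta_j(\tau)}{\eta(\tau)}\cdot\eta\text{-factor},
\]
where $\Theta_j$ is a sum over a coset of a lattice $L$ of rank one or two, twisted by the cubic character on $(\ZZ/13)^\times$. The natural candidate is a Gaussian-period twist of a weight-$\tfrac12$ unary theta series of level $4\cdot 3\cdot 13$. The $T$-law $\mathbf H(\tau+1)=e^{-\pi i/3}\mathbf H(\tau)$ comes for free once this is set up: the exponents $-1/6$ plus integer powers, together with $P_1=P_0-qP_2$, show that both components have $q$-expansions in $q^{-1/6}\ZZ[[q]]$, which gives the factor $e^{-2\pi i/6}$.

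Next I would apply Poisson summation to $\Theta_j$ under $\tau\mapsto-1/(13\tau)$, and combine it with $\eta(-1/\tau)=\sqrt{-i\tau}\,\eta(\tau)$ applied at $13\tau$. The Fourier transform of a character-twisted coset sum produces finite exponential sums $\sum_{a\in C_k}\zeta^{ab}$. Summing over $b\in C_j$ turns these into Gaussian periods: the coefficient matrix has entries that are linear combinations of $\eta_0,\eta_1,\eta_2$. Since $\eta_0+\eta_1+\eta_2=-1$ and $P_2$ is eliminated via $qP_2=P_0-P_1$, the three-by-three matrix collapses to a two-by-two one. Its entries should be differences of periods, giving $\gamma,\beta,\alpha$ with normalization $1/\sqrt{13}$. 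The weight factors $\sqrt{-i\tau}$ must cancel, which is consistent with the vector being weight zero.

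The identity $S_{13}^2=I$ reduces to $\gamma^2+\alpha\beta=13$. Expanding, this is $(\eta_1-\eta_0)^2+(\eta_1-\eta_2)(\eta_0-\eta_2)$. I would evaluate it using the cubic period polynomial for $p=13$, namely $x^3+x^2-4x+1$, through the relations $\sum\eta_i=-1$, $\sum\eta_i\eta_j=-4$ and $\eta_0\eta_1\eta_2=-1$. Alternatively, it follows by applying the Fricke involution twice, since $\tau\mapsto-1/(13\tau)$ is an involution.

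The main obstacle is the first step: finding the correct theta/lattice representation of the $P_j$ under which Fricke acts cleanly. These are products, not single theta quotients. If the direct theta route stalls, my fallback is a modular-function argument. In that approach I would show that both sides of \eqref{eq:main} are vector-valued modular functions for $\Gamma_0(13)$ (or $\Gamma_1(13)$) extended by Fricke. The products are generalized Dedekind eta quotients, that is Klein forms/Siegel functions $\prod_{a\in C_j}g_{a/13}$, and their transformation under $\Gamma_1(13)$ and $W_{13}$ is known from Siegel function theory. I would then compare finitely many $q$-expansion coefficients, with the count given by the Sturm bound, to pin down the constant matrix, and identify that matrix with $S_{13}$.
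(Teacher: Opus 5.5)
Your handling of the two easy assertions is fine. The $T$-law is immediate from $q^{-1/6}\mapsto e^{-\pi i/3}q^{-1/6}$. Also $S_{13}^2=\tfrac1{13}(\gamma^2+\alpha\beta)I$, with $\gamma^2+\alpha\beta=e_1^2-3e_2=1+12=13$.

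The gap is the Fricke law itself, and your primary route fails at its first step. Take the normalization that makes it modular: $\eta(\tau)\,q^{-1/6}P_0(q)=q^{-1/8}(1-q^2-q^3-q^4+q^5+\cdots)$ has a pole at $\infty$. So it is not a weight-$\tfrac12$ theta series, and a binary-form theta series has weight one, not $\tfrac12$. Moreover $\eta(-1/(13\tau))=\sqrt{-13i\tau}\,\eta(13\tau)$, so a representation over $\eta(\tau)$ is not carried to itself by the Fricke map.

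More fundamentally, the ``collapse'' of a period matrix to one with entries $\gamma,\beta,\alpha$ is asserted, not derived. By the eta (Siegel) transformation, $q^{-1/6}P_0$ evaluated at $-1/(13\tau)$ is a nonzero constant times $q^{-1/6}\prod_{n\geq1}\prod_{a\in C_0}(1-\zeta^aq^n)^{-1}$. The whole content of the theorem is that this twisted product lies in the span of $q^{-1/6}P_0$ and $q^{-1/6}P_1$. Your plan gives no reason why a two-dimensional space should be preserved.

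The paper, through \cite{FrickeCompanion}, supplies exactly this: after the eta transformation, it identifies the action through a degree-two map on $X_1(13)$. Concretely, $t=P_1/P_0$ is a modular function on $\Gamma_1(13)$ with no zeros or poles in $\HH$. At the cusp $a/13$, $a\in(\ZZ/13\ZZ)^\times/\{\pm1\}$, its order is $0$, $1$ or $-1$ according as $a\in C_0$, $C_1$ or $C_2$. At the six cusps over $0$ it is finite and nonzero, because both normalized products have order exactly $-1/6$ there. Thus $t$ has degree two on the genus-two curve $X_1(13)$. Since the $g^1_2$ of such a curve is unique, the automorphism induced by $\tau\mapsto-1/(13\tau)$ must act on $t$ by a M\"obius transformation. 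The transformed constants then fix its coefficients: these are the sine products of \eqref{eq:sinematrix}, rewritten as periods. The scalar factor is handled separately.

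In the language of your fallback, the same fact reads as follows. The functions $f_j=q^{-1/6}P_j\,\eta(\tau)^2\eta(13\tau)^2$ ($j=0,1$) are weight-two cusp forms with trivial character on $\Gamma_1(13)$. Hence they form a basis of the two-dimensional space $S_2(\Gamma_1(13))$, which the Fricke involution preserves. Meanwhile $(\eta(\tau)\eta(13\tau))^2$ only acquires the factor $-13\tau^2$.

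Your coefficient comparison is not a proof without such a step. You must show that the Fricke images carry the same multiplier on $\Gamma_1(13)$ as $q^{-1/6}P_0$ and $q^{-1/6}P_1$. You must also clear the poles of order $1/6$ before invoking a Sturm-type bound, which applies to holomorphic forms. With that step in place, a few coefficients suffice, and they confirm the stated $S_{13}$ rather than produce it.
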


In this product basis, $\sqrt{13}\,S_{13}$ has entries in the cyclic cubic
field $K=\QQ(\eta_0)$. Its projective action is defined over $K$, whereas
the entries of $S_{13}$ generate $K(\sqrt{13})=\QQ(\zeta)^+$.
The matrix also has a sine-product expression:
\begin{equation}\label{eq:sinematrix}
 S_{13}=\frac14
 \begin{pmatrix}s_2^{-1}&s_1^{-1}\\s_0^{-1}&-s_2^{-1}\end{pmatrix},
 \qquad
 s_j=\prod_{\substack{a\in C_j\\1\leq a\leq6}}
 \sin\frac{\pi a}{13}.
\end{equation}
The equivalence of \eqref{eq:main} and \eqref{eq:sinematrix} is
\cite[Lemma~3.3]{FrickeCompanion}. The complete proof of
Theorem~\ref{thm:main}, including the scalar multiplier, is given in
\cite[Sections~2--6]{FrickeCompanion}.

\subsection{Related transformations and theta identities}\label{sec:related}
Table~\ref{tab:transforms} displays the growing ranks and dimensions of
the classical family alongside selected smaller systems. Rank counts
summation variables in the stated representation, and dimension counts
vector components. For our level-$13$ row, rank two refers to the
conjectural sum representations in Conjecture~\ref{conj:products};
the two-dimensional product transformation is proved independently.

\begin{table}[H]
\caption{Selected transformation laws. Entries are given up to signs in
the cited normalizations. Rank counts summation variables; dimension
counts vector components.}\label{tab:transforms}
\centering\small
\renewcommand{\arraystretch}{1.35}
\begin{tabularx}{\textwidth}{@{}cccl>{\raggedright\arraybackslash}X@{}}\toprule
Modulus&Rank&Dimension&Entries of $S$&Status\\\midrule
5&1&2&$\dfrac2{\sqrt5}\sin(k\pi/5)$&Classical Rogers--Ramanujan transformation; see \cite{Mizuno,WangZhang}.\\
7&2&3&$\dfrac2{\sqrt7}\sin(k\pi/7)$&Classical Andrews--Gordon system; see \cite{Andrews} and \cite[Section~1.3]{Ono}.\\
9&3&4&$\dfrac2{\sqrt9}\sin(k\pi/9)$&Classical Andrews--Gordon system; see \cite{Andrews,Ono}.\\
9&2&3&$\dfrac1{2\sqrt3\sin(k\pi/9)}$&Product law: \cite{WangZhang}; sums: three symmetric cases \cite{MizunoKR}, all five \cite{Xia}.\\
11&4&5&$\dfrac2{\sqrt{11}}\sin(k\pi/11)$&Classical Andrews--Gordon system; see \cite[Section~4]{Ono}.\\
11&3&5&$\sqrt{\dfrac2{11}}\sin(k\pi/11)$&Dual-vector formulas \cite[(54)]{Mizuno}; proved in \cite[Theorem~1.6]{WangWang3}.\\
13&5&6&$\dfrac2{\sqrt{13}}\sin(k\pi/13)$&Classical Andrews--Gordon system; see \cite{Andrews,Ono}.\\
13&2&2&$S_{13}$ in \eqref{eq:main}&Product law: Theorem~\ref{thm:main}. Sum identities: Conjecture~\ref{conj:products}.\\\bottomrule
\end{tabularx}
\end{table}

The classical rows have $(\text{rank},\text{dimension})=(1,2),(2,3),
(3,4),(4,5),(5,6)$ at moduli $5,7,9,11,13$. The Kanade--Russell row at
$9$ is already a smaller system; our level-$13$ row combines a proved
two-component Fricke law with conjectural rank-two product evaluations.
These are comparisons of the specified constructions, not lower bounds
for every representation at the same modulus. In particular, the
classical vectors transform under $\tau\mapsto-1/\tau$, while the
level-$13$ theorem concerns $\tau\mapsto-1/(13\tau)$.

The rows of Table~\ref{tab:transforms} also use different normalizations: the
listed modulus need not be the scaling parameter $N$ in a law of the form
$\mathbf g(-1/\tau)=S\mathbf g(\tau/N)$. The classical modulus-$7$ row
uses denominator steps $(1,1)$ and three vector components. A different
rank-two construction with steps $(1,2)$ separates the first summation
index by parity, giving six components and $3\times3$ blocks with entries
$\sqrt{2/7}\sin(k\pi/7)$; see \cite[(42)--(46)]{Mizuno} and
\cite[Theorem~1.3]{WangWang2}. The rank-three modulus-$11$ example has
two five-component vectors, with
\[
 \mathbf g(-1/\tau)=S\mathbf g^\vee(\tau/2),\qquad
 \mathbf g^\vee(-1/\tau)=2S\mathbf g(\tau/2)
\]
in Mizuno's normalization \cite[(54)]{Mizuno}. The two modulus-$11$ rows
therefore describe distinct summation systems, despite their similar sine
coefficients.

The modulus-$7$ Andrews--Gordon system already has rank two and vector
dimension three. Its summation matrix is
$A_7^{AG}=\left(\begin{smallmatrix}2&2\\2&4\end{smallmatrix}\right)$
with $D=I_2$; its three-component sine transformation is the classical
case recorded in Table~\ref{tab:transforms}
\cite{Andrews}\cite[Section~1.3]{Ono}.

A closer precedent for mixed denominator steps is the modulus-nine
transformation arising in Mizuno's study of Nahm sums for symmetrizable
matrices \cite{Mizuno}. Put
\[
 B_a(q)=(q^a,q^3,q^6,q^{9-a};q^9)_\infty^{-1}
 \quad(a=1,2,4),\qquad
 \mathbf K_9(\tau)=
 \begin{pmatrix}q^{-1/18}B_1(q)\\q^{5/18}B_2(q)\\q^{11/18}B_4(q)\end{pmatrix}.
\]
Wang and Zhang \cite[Theorem~1.1]{WangZhang} prove the product-side formula
\begin{equation}\label{eq:mod9}
 \mathbf K_9(-1/\tau)=
 \begin{pmatrix}b_1&b_2&b_4\\b_2&-b_4&-b_1\\b_4&-b_1&b_2\end{pmatrix}
 \mathbf K_9(\tau/3),\qquad
 b_k=\frac1{2\sqrt3\sin(k\pi/9)}.
\end{equation}
Their proof transforms generalized eta factors into twisted products and
then uses Garvan's $3$-dissection formulas to re-expand them.
They also record Mizuno's independent proof using the $A_2$ Macdonald
identity and binary theta series \cite[Remark~2]{WangZhang}.
The three symmetric Kanade--Russell sum--product identities are now
proved by Mizuno \cite[Theorem~1.1]{MizunoKR} and, independently, by
Xia \cite[Theorem~1.1]{Xia}. Thus \eqref{eq:mod9} also applies to the
corresponding normalized double sums. Xia proves all five modulo-nine
identities, together with four individual dual identities and a fifth
dual companion. These results appeared in September 2026 preprints; their independence
is recorded in \cite[Introduction, version~2]{MizunoKR}.
Further transformation laws for generalized rank-two and rank-three
Nahm-sum vectors appear in \cite{WangWang2,WangWang3}.

At modulus $13$, the classical Andrews--Gordon products \cite{Andrews}
also occur in Jain's series identities \cite[(3.15)--(3.20)]{Jain}.
Writing
\[
 J_j(q)=\frac{(q^j,q^{13-j},q^{13};q^{13})_\infty}{(q;q)_\infty}
 \quad(1\leq j\leq6),\qquad
 E(q)=\frac{(q^{13};q^{13})_\infty}{(q;q)_\infty},
\]
direct cancellation of product factors gives
\begin{equation}\label{eq:classical13-products}
 P_0=\frac{E^2}{J_1J_5},\qquad
 P_1=\frac{E^2}{J_2J_3},\qquad
 P_2=\frac{E^2}{J_4J_6}.
\end{equation}
Thus the four-residue products $P_j$ are quotients of the familiar
ten-residue products $J_j$ and a common factor. Further modulus-$13$
comparisons include the cylindric Kanade--Russell identities studied by
Uncu \cite{Uncu} and the tredecic theta identities of Vasuki and Prabhu
\cite{VasukiPrabhu}. The relation $P_0=P_1+qP_2$ itself is a specialization
of the Weierstrass theta identity.

Although the classical modulus-$13$ vector has six components, these
particular normalized product quotients span a two-dimensional space.
The recalled Theorem~\ref{thm:main} establishes its Fricke stability with the explicit
Gaussian-period matrix \eqref{eq:main}, and Section~\ref{sec:framework}
connects it to the rank-two saddle. The elementary relations
\eqref{eq:classical13-products} do not by themselves establish this linear
transformation or the conjectural sum evaluations. The Fricke proof starts
with eta transformation and identifies the resulting action through a
degree-two map on $X_1(13)$. The cosets \eqref{eq:cosets} both partition
the nonzero residues and index the Gaussian periods in the matrix,
connecting the product supports to the coefficients of the transformation.

\subsection{From modular coefficients to dilogarithmic saddle points}
The Fricke matrix raises a further arithmetic question: do its coefficients
identify the saddle point of a related $q$-series? We show that ratios of
its entries recover two real cubic numbers satisfying a weighted Rogers
dilogarithm identity. Their complement equations specify a symmetrizable
rank-two quadratic form, and the dilogarithm evaluation determines its
leading radial exponential. Independently, the Fricke transformation gives
the same exponential from the product expansion at the transformed cusp.
Proposition~\ref{prop:amplitudes} also identifies all three leading
amplitudes by an independent saddle calculation.

This connection extends a familiar theme in the
Rogers--Ramanujan--Andrews--Gordon setting \cite{Andrews,Kirillov} to a
symmetrizable system with denominator steps $1$ and $13$
\cite{Kanade,Mizuno}. We propose three two-summand representations of the
products, prove their exact linear dependence, and verify their coefficients
through degree $800$. These infinite sum--product identities remain
conjectural. The Fricke theorem of \cite{FrickeCompanion} and the dilogarithm
evaluation proved here are independent of these conjectures.

Section~\ref{sec:cubic} identifies the cubic parameters and states
their Rogers evaluation, proved in Appendix~\ref{app:certificate}.
Section~\ref{sec:framework} gives the conversion in arbitrary degree
and the asymptotic conditions. Section~\ref{sec:examples} compares
rational, quadratic and cubic instances, including exact obstructions
for two quadratic systems. Section~\ref{sec:sieve} then restricts the
input to a finite cubic family and proves the three-matrix theorem and
the coset obstruction. Section~\ref{sec:conjectures} states the
sum--product conjectures, their finite evidence, and their exact
leading amplitudes.
Section~\ref{sec:signature} addresses mixed signatures.

\section{Cubic parameters and a Rogers dilogarithm identity}\label{sec:cubic}
The coefficients of the Fricke matrix now supply the algebraic parameters
for a dilogarithm evaluation. We first describe these parameters by a
cubic polynomial, then identify them with ratios of matrix entries.

\subsection{The cubic parameters}
Let
\begin{equation}\label{eq:rho}
 f(X)=X^3-4X^2+X+1,\qquad \rho\in(0,1),\quad f(\rho)=0,
 \qquad w=1-\rho,\quad \sigma=\frac{w}{\rho^4}.
\end{equation}
The polynomial has exactly one root in $(0,1)$: its derivative has one zero
there, and it first increases from $f(0)=1$ and then decreases to $f(1)=-1$.
A rational isolating interval is
\begin{equation}\label{eq:rhointerval}
 \frac{726109445035}{10^{12}}<\rho<\frac{726109445036}{10^{12}}.
\end{equation}
The endpoint signs are opposite and $f'$ is negative on this interval.

\begin{lemma}\label{lem:algebra}
The element $\sigma$ lies in $(0,1)$, satisfies
$\sigma^3-69\sigma^2+66\sigma+1=0$, and obeys
\begin{equation}\label{eq:complements}
 1-\rho=\rho^4\sigma,\qquad 1-\sigma=\rho^{13}\sigma^4.
\end{equation}
In the basis $1,\rho,\rho^2$ one has
\begin{equation}\label{eq:reduction}
 \rho^4=15\rho^2-5\rho-4,\qquad \sigma=41-66\rho+15\rho^2.
\end{equation}
\end{lemma}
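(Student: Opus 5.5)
All the claims are polynomial identities in $\QQ[\rho]/(f)$, so the plan is to reduce everything to the basis $1,\rho,\rho^2$ using $\rho^3=4\rho^2-\rho-1$.

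\textbf{Reduction formulas.} First compute $\rho^4=\rho\cdot\rho^3=4\rho^3-\rho^2-\rho$. Substituting for $\rho^3$ gives $\rho^4=15\rho^2-5\rho-4$, the first formula in \eqref{eq:reduction}.

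\textbf{Formula for $\sigma$.} Since $f(\rho)=0$ and $\rho\neq0$, we have $\rho(\rho^2-4\rho+1)=-1$. So $\rho^{-1}=-\rho^2+4\rho-1$, and $\rho$ is a unit. I would compute $\rho^{-4}=(\rho^{-1})^2$ squared in the basis. A cleaner route is to verify the claimed expression directly. Set $\sigma'=41-66\rho+15\rho^2$ and check $\rho^4\sigma'=1-\rho$ by expanding the product of $15\rho^2-5\rho-4$ with $\sigma'$, reducing $\rho^3,\rho^4$ each time. Since $\rho^4$ is invertible, this gives $\sigma=\sigma'$ and hence the first identity of \eqref{eq:complements}.

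\textbf{The cubic for $\sigma$.} Compute $\sigma^2$ and $\sigma^3$ in the basis $1,\rho,\rho^2$ and confirm that $\sigma^3-69\sigma^2+66\sigma+1$ reduces to $0$. Alternatively, take the resultant of $f(X)$ and $Y-(41-66X+15X^2)$ in $X$. This is routine bookkeeping with integers of moderate size.

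\textbf{The second complement identity.} The identity $1-\sigma=\rho^{13}\sigma^4$ is the main computational step. I would rewrite it using $\sigma=(1-\rho)/\rho^4$. Then $\rho^{13}\sigma^4=(1-\rho)^4/\rho^3$, so the claim becomes
\[
\rho^3(1-\sigma)=(1-\rho)^4,\qquad\text{i.e.}\qquad \rho^3-\rho^3\sigma=(1-\rho)^4 .
\]
Now $\rho^3\sigma=(1-\rho)/\rho$, so the claim is equivalent to $\rho^4-(1-\rho)=\rho(1-\rho)^4$, a polynomial identity in $\rho$ of degree $5$. To check it, I would show that $X(1-X)^4-X^4+1-X$ is divisible by $f(X)$; equivalently, reduce it to zero via the first formula in \eqref{eq:reduction} and $\rho^5=\rho\cdot\rho^4$. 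Collapsing the power $13$ to a quintic in this way is the one idea needed; the rest is routine reduction.

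\textbf{The bound $\sigma\in(0,1)$.} Since $0<\rho<1$, we have $\sigma=(1-\rho)/\rho^4>0$. Then $1-\sigma=\rho^{13}\sigma^4>0$ gives $\sigma<1$. A numerical check with \eqref{eq:rhointerval} confirms that $\sigma\approx0.9855$ lies in $(0,1)$, but it is not needed.
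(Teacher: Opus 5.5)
Your proposal is correct and follows the paper's proof. Reduction modulo $f$ gives \eqref{eq:reduction} and the cubic for $\sigma$. The second complement equation collapses to $\rho^4+\rho-1=\rho(1-\rho)^4$, which the paper makes explicit as $X^4+X-1-X(1-X)^4=-(X^2-X+1)f(X)$. Then $0<\sigma<1$ follows from the definition and that equation. One harmless slip: numerically $\sigma=0.98530\ldots$, not $0.9855$, though as you say the decimal plays no role.
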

\begin{proof}
Reduction by $f(\rho)=0$ gives \eqref{eq:reduction} and the polynomial for
$\sigma$. The first equation in \eqref{eq:complements} is its definition.
The polynomial identity
\[
 X^4+X-1-X(1-X)^4=-(X^2-X+1)f(X)
\]
gives $\rho^4+\rho-1=\rho(1-\rho)^4$.
Division by $\rho^4$ proves the second complement equation. The definition
gives $\sigma>0$, and that equation then gives $\sigma<1$.
\end{proof}
Numerically, $\rho=0.7261094450\ldots$ and
$\sigma=0.9853005201\ldots$; these decimals are not used in the proof.

The period calculation in \cite[Lemma~3.1]{FrickeCompanion} gives
the polynomial $X^3+X^2-4X+1$ and
\begin{equation}\label{eq:periodrelations}
 \eta_1=2-2\eta_0-\eta_0^2,\qquad
 \eta_2=-3+\eta_0+\eta_0^2.
\end{equation}
Also
\[
 \eta_0=2\bigl(\cos(2\pi/13)-\cos(3\pi/13)\bigr),\qquad
 0<\eta_0<\frac{2\pi}{13}<1.
\]
The upper bound is the mean value theorem. Substituting $X=1-Y$ in the
period polynomial gives $-f(Y)$, so
\begin{equation}\label{eq:rhoperiod}
 \rho=1-\eta_0.
\end{equation}
Both cubics are irreducible over $\QQ$ by the rational-root test. Thus
$K=\QQ(\rho)=\QQ(\eta_0)$ is the fixed field of the cubic-residue subgroup
in $\QQ(\zeta)$: it is the cyclic cubic field of conductor $13$, a proper
subfield of $\QQ(\zeta)^+$.

\subsection{Recovery from the Fricke matrix}
The same parameters occur in the transformation coefficients:
\begin{equation}\label{eq:matrixratios}
 \rho=\frac\beta\alpha,\qquad
 1-\rho=\frac\gamma\alpha,\qquad
 \sigma=\frac{\gamma\alpha^3}{\beta^4}.
\end{equation}
Equivalently, in terms of the entries of $S_{13}$,
\[
 \rho=\frac{S_{12}}{S_{21}},\qquad
 \sigma=\frac{S_{11}S_{21}^3}{S_{12}^4}.
\]

Put $e=\eta_0$. The period identities give
\[
 \alpha=5-3e-2e^2,\qquad \beta=3-e^2,\qquad \gamma=2-3e-e^2.
\]
Reduction by $e^3=-e^2+4e-1$ gives $\alpha e=\gamma$.
Since $\rho=1-e$ and $\alpha=\beta+\gamma$, the first two equations in
\eqref{eq:matrixratios} follow. The third is
$\sigma=(1-\rho)/\rho^4$. The common factor $1/\sqrt{13}$ cancels in
the displayed ratios of matrix entries.

\subsection{The Rogers evaluation}
The complement equations in Lemma~\ref{lem:algebra} give a multiplicative
relation between the cubic parameters. Their additive relation is expressed
by the real Rogers dilogarithm. To state it, write
\begin{equation}\label{eq:Rogersdefinition}
 L(x)=\Li_2(x)+\frac12\log x\log(1-x)\quad(0<x<1),\qquad
 L(1)=\frac{\pi^2}{6},
\end{equation}
where $\Li_2(x)=\sum_{n\geq1}x^n/n^2$.

\begin{theorem}\label{thm:dilog}
For the arguments in \eqref{eq:rho},
\begin{equation}\label{eq:dilog}
 13L(\rho)+L(\sigma)=\frac{5\pi^2}{3}.
\end{equation}
\end{theorem}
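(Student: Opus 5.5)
The plan is to prove \eqref{eq:dilog} in two stages. The first stage is qualitative: the left side lies in $\QQ\pi^2$. The second is quantitative: the rational multiple is exactly $5/3$. For the first stage, I will use the complement equations \eqref{eq:complements} of Lemma~\ref{lem:algebra} to show that $\xi=13[\rho]+[\sigma]$ lies in the Bloch group of $K=\QQ(\rho)$.

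In $\wedge^2 K^\times$ (modulo $2$-torsion, so that $x\wedge x=0$ and $(-1)\wedge x$ may be ignored), the relations $1-\rho=\rho^4\sigma$ and $1-\sigma=\rho^{13}\sigma^4$ give
\[
13\,\rho\wedge(1-\rho)+\sigma\wedge(1-\sigma)
=13\,\rho\wedge\sigma+13\,\sigma\wedge\rho=0 .
\]
Here the weight $13$ is exactly the symmetrizer of the exponent matrix $\left(\begin{smallmatrix}4&1\\13&4\end{smallmatrix}\right)$. Hence $\xi$ lies in the kernel of the Bloch--Wigner map, which is $B(K)\otimes\QQ$. Since $K$ is totally real (the cyclic cubic field of conductor $13$), Borel's theorem gives $K_3^{\mathrm{ind}}(K)\otimes\QQ=0$. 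Thus $\xi$ is torsion, and the real Rogers dilogarithm on torsion elements takes values in $\QQ\pi^2$. Concretely, $13L(\rho)+L(\sigma)\in\frac{\pi^2}{6N}\ZZ$ for an $N$ controlled by the order of the torsion, for instance $N\mid w_2(K)$ up to a small factor. This gives the first stage for the real embedding in which $\rho,\sigma\in(0,1)$.

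The second stage identifies the rational number. My first choice is to make the argument fully explicit rather than rely on $K$-theory. I would search for a finite chain of five-term relations
\[
L(x)+L(y)=L(xy)+L\!\left(\frac{x(1-y)}{1-xy}\right)+L\!\left(\frac{y(1-x)}{1-xy}\right)
\]
together with reflections $L(x)+L(1-x)=\pi^2/6$. Every intermediate argument would stay in $K\cap(0,1)$ and be expressed in the basis $1,\rho,\rho^2$ via \eqref{eq:reduction}. The chain would reduce $13[\rho]+[\sigma]$ to $10$ copies of $[1]$, giving $10\cdot\pi^2/6=5\pi^2/3$. Each step would be checked by exact reduction modulo $f$. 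Positivity of the arguments would be certified with the isolating interval \eqref{eq:rhointerval}, so that every $L$ is evaluated on $(0,1)$ where the identities hold without branch corrections. Such a certificate would constitute Appendix~\ref{app:certificate}. Natural seeds are the combinations suggested by \eqref{eq:complements}, for example $x=\rho$, $y=\rho^k$, or $y=\sigma$. I would also use the multiplicative structure of the units $\rho$, $1-\rho$, $\sigma$, $1-\sigma$, which are all products of powers of $\rho$ and $\sigma$, to close the chain.

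As a fallback, I would combine the first stage with interval arithmetic. From \eqref{eq:rhointerval} I would enclose $\sigma$ using \eqref{eq:reduction}, then bound $13L(\rho)+L(\sigma)$ in an interval of width much smaller than $\pi^2/(6N)$. That interval would contain $5\pi^2/3$ and no other admissible value. This requires an explicit denominator bound $N$, which I would take from the known description of torsion in $B(K)$ for totally real fields, and the value should be close to the numerical $5\pi^2/3\approx16.449$.

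I expect the main obstacle to be this quantitative step. Finding a short five-term certificate is a combinatorial search that may not terminate quickly. The fallback, in turn, requires quoting a precise denominator bound for Rogers values of torsion Bloch elements, which is delicate because of the $2$-torsion ambiguity and the normalization $L(1)=\pi^2/6$. I would therefore try the explicit certificate first and use the torsion argument to confirm that one must exist.
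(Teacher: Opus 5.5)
Your first stage is correct as far as it goes. The complement equations \eqref{eq:complements} give $\delta(13[\rho]+[\sigma])=0$ exactly; note that the relevant map is $[x]\mapsto x\wedge(1-x)$, not the Bloch--Wigner function. Since $K$ is totally real, the class is torsion, so $13L(\rho)+L(\sigma)\in\QQ\pi^2$. But that is not what the theorem asserts. The paper records the same exterior-square cancellation in Section~\ref{sec:framework} and says explicitly that it does not determine the value. The whole content of Theorem~\ref{thm:dilog} is the number $5\pi^2/3$, and for that you only describe a search. The paper's proof \emph{is} the certificate you name as your first choice. It consists of fourteen five-term relations and eight reflections (Tables~\ref{tab:five} and~\ref{tab:reflection}), whose formal sum reduces modulo $t^3-4t^2+t+1$ to exactly $13[\rho]+[\sigma]-10[C]$, as in \eqref{eq:certificate}. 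Interval arithmetic on \eqref{eq:rhointerval} then places all $36$ primary arguments in $(58/1000,986/1000)$. Until you exhibit such a combination, or complete the fallback, your proposal proves rationality but not the stated identity.

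Two of your auxiliary claims also need repair.

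First, the search space you suggest is likely too narrow. You propose seeds $x=\rho$ and $y=\rho^k$ or $\sigma$, closed up using only the units $\rho,1-\rho,\sigma,1-\sigma$. The paper's certificate leaves the group generated by $\rho$ and $1-\rho$. It uses $u_1=1+\rho$ and $u_2=(1+w)/w=3+3\rho-\rho^2$, both of norm $5$, and even the non-unit argument $u_2/5$.

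Second, the torsion argument does not ``confirm that one must exist'' in the form you search for. Torsion in $B(K)$ only says that some multiple $N\xi$ is a combination of five-term relations with arbitrary arguments in $K$. For such arguments the real Rogers dilogarithm satisfies the relations only up to branch corrections. So it guarantees neither a certificate for $\xi$ itself nor one with every argument in $(0,1)$.

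Your fallback is a legitimate alternative route, but it needs three inputs you do not supply:
\begin{enumerate}
\item that $L$ descends to a homomorphism on $B(\RR)$ modulo an explicit multiple of $\pi^2$;
\item an explicit exponent for the torsion of $B(K)$, via Suslin's comparison with $K_3^{\mathrm{ind}}(K)$ and its torsion computation;
\item a rigorous numerical enclosure of $13L(\rho)+L(\sigma)$ finer than the resulting spacing.
\end{enumerate}
The paper's finite certificate avoids all of this machinery.
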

The proof is given in Appendix~\ref{app:certificate}. It combines fourteen
real five-term relations with eight reflection relations, with all arguments
in $(0,1)$.

Together, the weighted identity and the complement equations determine
the rank-two system considered next.

\section{Weighted Rogers relations and symmetrizable Nahm systems}
\label{sec:framework}
We now place the level-$13$ evaluation in a framework that applies in
every algebraic degree. Complement equations determine the matrix,
the Rogers value determines the leading radial action, and higher
coefficients impose further conditions on modularity. We give the
common conversion first, then apply it to the parameters of
Section~\ref{sec:cubic} and to the examples in
Section~\ref{sec:examples}.
\subsection{Complement equations and quadratic forms}
We use Mizuno's convention \cite{Mizuno}. Let
$D=\operatorname{diag}(d_1,\ldots,d_N)$,
where $d_i$ are positive integers, and suppose that $AD$ is symmetric
positive definite. A symmetrizable Nahm sum has the form
\begin{equation}\label{eq:nahmsum}
 f_{A,\mathbf b,c,\mathbf d}(q)=
 \sum_{\mathbf n\in\ZZ_{\geq0}^N}
 \frac{q^{\frac12\mathbf n^TAD\mathbf n+\mathbf b^T\mathbf n+c}}
 {\prod_{i=1}^N(q^{d_i};q^{d_i})_{n_i}}.
\end{equation}
Here $(a;q)_n=\prod_{j=0}^{n-1}(1-aq^j)$. The positive Nahm solution
satisfies
\begin{equation}\label{eq:nahmequations}
 1-z_i=\prod_j z_j^{A_{ij}}.
\end{equation}

Suppose a two-term relation $mL(z_0)+L(z_1)=c_0\pi^2$, with
$0<z_0,z_1<1$ and $m\in\ZZ_{>1}$, is accompanied by
\begin{equation}\label{eq:generalcomplements}
 1-z_0=z_0^a z_1^b,\qquad 1-z_1=z_0^{mb}z_1^d.
\end{equation}
Rational powers are interpreted in the positive real embedding. These
additional equations give
\begin{equation}\label{eq:generalmatrix}
 A=\begin{pmatrix}a&b\\mb&d\end{pmatrix},\qquad
 D=\operatorname{diag}(1,m),\qquad
 AD=\begin{pmatrix}a&mb\\mb&md\end{pmatrix}.
\end{equation}
The last matrix is positive definite precisely when $a>0$ and
$ad-mb^2>0$. Its quadratic form is
\[
 \frac a2 r^2+mb\,rs+\frac{md}{2}s^2.
\]
The coefficient $m$ in the Rogers relation is therefore compatible with the
second denominator step. The Rogers value alone does not specify the
complement equations, the exponent matrix, or the linear terms.

There is a corresponding exterior-product cancellation. In the rationalized
exterior square of the multiplicative group of a field containing the
arguments, clearing denominators in \eqref{eq:generalcomplements} gives
\[
 z_0\wedge(1-z_0)+\frac1m z_1\wedge(1-z_1)
 =b\,z_0\wedge z_1+b\,z_1\wedge z_0=0.
\]
This explains the weighting, but does not on its own determine the Rogers
value in the chosen real embedding.

\subsection{A common conversion in every algebraic degree}
\label{sec:conversion}
The following construction uses no assumption on the degree of the field.
Let $0<x,y<1$, let $m>1$ be an integer, and suppose that $x,1-x$
are multiplicatively independent and that rational exponents satisfy
\begin{equation}\label{eq:monomial-input}
 y=x^p(1-x)^h,\qquad 1-y=x^u(1-x)^v,
 \qquad pv-hu=-m.
\end{equation}
These are the monomial data that accompany the Rogers value. Rational,
quadratic and cubic examples all pass through the same conversion.
Field degree becomes relevant later, when an arithmetic classification
is used to supply or restrict the input records.

\begin{proposition}\label{prop:monomial-matrix}
Suppose \eqref{eq:monomial-input} holds with $h\ne0$. Its unique rational
Nahm matrix for the ordered pair $(x,y)$ is
\begin{equation}\label{eq:converted-matrix}
 A=\frac1h\begin{pmatrix}-p&1\\m&v\end{pmatrix},\qquad
 D=\operatorname{diag}(1,m),\qquad
 \det(AD)=-\frac{mu}{h}.
\end{equation}
The cross term is positive and $AD$ is positive definite exactly when
$h>0$, $p<0$, and $u<0$. A rational linear term can make
$\tfrac12(r,s)AD(r,s)^T$ integer-valued on $\ZZ^2$ exactly when
all three entries of the symmetric matrix $AD$ are integers.
\end{proposition}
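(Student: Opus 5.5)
The plan is to solve the Nahm equations \eqref{eq:nahmequations} directly for the unknown exponents, using the multiplicative independence of $x$ and $1-x$ to compare exponents. Write $A=\begin{pmatrix}a&b\\c&d\end{pmatrix}$ with $D=\operatorname{diag}(1,m)$. Symmetry of $AD$ forces $c=mb$, which is the shape in \eqref{eq:generalmatrix}. Substitute $y=x^p(1-x)^h$ into the first equation $1-x=x^a y^b$. This gives
\[
 1-x=x^{a+bp}(1-x)^{bh}.
\]
Independence then forces $bh=1$ and $a+bp=0$, so $b=1/h$ and $a=-p/h$; this uses $h\ne0$.

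The second equation is $1-y=x^c y^d$. Substituting for $y$ and comparing with $1-y=x^u(1-x)^v$ gives
\[
 x^u(1-x)^v=x^{c+dp}(1-x)^{dh}.
\]
Hence $d=v/h$ and
\[
 c=u-\frac{vp}{h}=\frac{hu-pv}{h}=\frac mh,
\]
where the last step uses $pv-hu=-m$ from \eqref{eq:monomial-input}. The value $c=m/h$ is exactly $mb$, so the symmetrizability constraint is automatically satisfied rather than imposed. Every entry was forced, which gives uniqueness, and the result is \eqref{eq:converted-matrix}.

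For the determinant, I will compute
\[
 \det A=\frac{-pv-m}{h^2}=\frac{-hu}{h^2}=-\frac uh,
\]
again using $pv-hu=-m$. Multiplying by $\det D=m$ gives $\det(AD)=-mu/h$. Explicitly,
\[
 AD=\frac1h\begin{pmatrix}-p&m\\m&mv\end{pmatrix}.
\]
The cross term $m/h$ is positive exactly when $h>0$. Given $h>0$, the diagonal entry $-p/h$ is positive iff $p<0$, and $\det(AD)>0$ iff $u<0$. Sylvester's criterion then gives the stated equivalence.

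For the integrality claim, write the quadratic form as $Q(r,s)=\tfrac12\alpha r^2+\kappa rs+\tfrac12\varepsilon s^2$, where $\alpha,\kappa,\varepsilon$ are the entries of $AD$, and let $\ell$ be a rational linear form.
\begin{itemize}
\item \emph{Necessity.} Suppose $Q+\ell$ is integer-valued on $\ZZ^2$. Then all its finite differences are integers. The second differences $\Delta_r^2$, $\Delta_s^2$ and the mixed difference $\Delta_r\Delta_s$ kill $\ell$ and return $\alpha$, $\varepsilon$ and $\kappa$ respectively. So all three entries are integers.
\item \emph{Sufficiency.} Suppose the entries are integers. Take $\ell(r,s)=\tfrac12\alpha r+\tfrac12\varepsilon s$. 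Then $\tfrac12\alpha r^2+\tfrac12\alpha r=\alpha\binom{r+1}2$ is an integer, and the same holds in $s$. The mixed term $\kappa rs$ is already an integer.
\end{itemize}

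No step is a real obstacle. The point needing care is the exponent comparison in the first two paragraphs. Independence is used with rational exponents in the positive real embedding, so I will clear denominators before comparing: raise both sides to a common integer power so that independence applies to integer exponents.
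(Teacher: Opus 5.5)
Your proposal is correct and follows essentially the paper's proof. It uses exponent comparison via multiplicative independence, the determinant via $pv-hu=-m$, Sylvester's criterion together with the sign of the cross term $m/h$, and second and mixed forward differences plus a half-diagonal linear correction for integrality. The only cosmetic difference is in how uniqueness is obtained: you substitute $y=x^p(1-x)^h$ into a general ansatz and compare exponents of $x$ and $1-x$ directly, which also shows that $c=mb$ comes for free. The paper instead solves for $1-x=x^{-p/h}y^{1/h}$ and then notes that $x,y$ inherit independence from $x,1-x$.
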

\begin{proof}
Solving the first equation of \eqref{eq:monomial-input} for $1-x$ gives
$1-x=x^{-p/h}y^{1/h}$. Substitution into the second gives
$1-y=x^{u-pv/h}y^{v/h}=x^{m/h}y^{v/h}$. Independence of $x,1-x$
and $h\ne0$ imply independence of $x,y$, proving uniqueness of the
exponents. Computing the determinant gives \eqref{eq:converted-matrix};
Sylvester's criterion and the sign of $m/h$ prove the positivity statement.
For integrality, second forward differences in $r$ and $s$ give the two
diagonal entries, and a mixed forward difference gives the off-diagonal
entry. These must be integers. Conversely, for an integral symmetric
matrix $B$, subtracting $B_{11}r/2+B_{22}s/2$ from
$\tfrac12(r,s)B(r,s)^T$ leaves
$B_{11}\binom r2+B_{12}rs+B_{22}\binom s2$, an integer.
\end{proof}

Simultaneous reflection sends $(x,y)$ to $(1-x,1-y)$ and the
exponent record $(p,h,u,v)$ to $(v,u,h,p)$. If $A$ is invertible,
it replaces $A$ by $A^{-1}$: take logarithms of the complement
equations and interchange their two sides. Positive definiteness is
preserved, while the off-diagonal sign is reversed.

The conversion separates three questions: whether the monomial data
give a positive-definite quadratic form, whether the desired summation
lattice permits integral exponents, and whether suitable linear terms
give a modular series. Integrality is a restriction of the ansatz used
in our cubic sieve, not a general prerequisite for modularity.
The higher asymptotic conditions test the last question independently.
If $x,1-x$ are dependent, the uniqueness assertion does not apply;
the Lewin family in Section~\ref{sec:signature} illustrates that case.

\subsection{The radial action}\label{sec:radial-action}
For $q=e^{-\varepsilon}$, the leading exponential constant in the
asymptotics of \eqref{eq:nahmsum} is \cite[Section~2]{Mizuno}
\begin{equation}\label{eq:lambda}
 \Lambda=\sum_i\frac{L(1)-L(z_i)}{d_i}.
\end{equation}
Mizuno's Rogers function is shifted by $-\pi^2/6$ relative to ours.
To see the complement equations in this calculation, put $z_i=e^{-t_i}$
and scale $n_i\sim t_i/(d_i\varepsilon)$. The leading action is
\[
 \mathcal S(\mathbf t)=
 \sum_i\frac{\pi^2/6-\Li_2(e^{-t_i})}{d_i}
 -\frac12\mathbf t^TD^{-1}A\mathbf t.
\]
The matrix $D^{-1}A$ is symmetric positive definite because it equals
$D^{-1}(AD)D^{-1}$. Differentiation gives
$\log(1-e^{-t_i})=-(A\mathbf t)_i$, which is
\eqref{eq:nahmequations}. Substituting at the stationary point gives
\eqref{eq:lambda}. The full asymptotic theorem justifies this saddle-point
calculation.

In the two-variable situation above, Euler reflection gives
\begin{equation}\label{eq:weightedlambda}
 \Lambda=L(1-z_0)+\frac1mL(1-z_1)
 =\frac{\pi^2}{m}\left(\frac{m+1}{6}-c_0\right).
\end{equation}
For a reciprocal product on a set $S$ of nonzero residues modulo $M$,
\[
 P_S(q)=\prod_{\substack{n>0\\n\bmod M\in S}}(1-q^n)^{-1},
\]
the leading constant is $|S|\pi^2/(6M)$. Indeed, the Riemann sum on each
arithmetic progression gives the integral
$\int_0^\infty-\log(1-e^{-x})\,dx=\pi^2/6$.
Matching this constant restricts the density of a candidate product; it
leaves the actual residue classes undetermined.

\subsection{The level-13 system and its modular cusp expansion}
The complement equations give
\begin{equation}\label{eq:cubicmatrix}
 A=\begin{pmatrix}4&1\\13&4\end{pmatrix},\quad
 D=\operatorname{diag}(1,13),\quad
 AD=\begin{pmatrix}4&13\\13&52\end{pmatrix},\quad \det(AD)=39>0.
\end{equation}
Thus $(\rho,\sigma)$ is its positive Nahm solution. Equations
\eqref{eq:lambda} and \eqref{eq:dilog} yield
\begin{equation}\label{eq:cubiclambda}
 \Lambda=\frac{14}{13}\frac{\pi^2}{6}
 -\frac1{13}\frac{5\pi^2}{3}
 =\frac{2\pi^2}{39},\qquad \frac{6\Lambda}{\pi^2}=\frac4{13}.
\end{equation}
Within a modulus-$13$ reciprocal-product ansatz, the support therefore has
size four. There are fifteen symmetric four-element supports obtained by
choosing two of the six pairs $\{\pm a\}$; this argument does not distinguish
the three cosets \eqref{eq:cosets} from the other twelve supports.

\begin{proposition}\label{prop:productasy}
The Fricke law gives the radial asymptotics
\begin{equation}\label{eq:productasy}
 \binom{P_0(e^{-\varepsilon})}{P_1(e^{-\varepsilon})}
 =\frac1{\sqrt{13}}
 \exp\!\left(\frac{2\pi^2}{39\varepsilon}-\frac\varepsilon6\right)
 \left\{\binom\alpha\beta+O\!\left(e^{-4\pi^2/(13\varepsilon)}\right)\right\}
 \quad(\varepsilon\downarrow0).
\end{equation}
The third product satisfies
$P_2(e^{-\varepsilon})\sim\gamma e^{\Lambda/\varepsilon}/\sqrt{13}$,
with $\Lambda=2\pi^2/39$.
\end{proposition}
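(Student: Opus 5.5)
The plan is to evaluate Theorem~\ref{thm:main} at a purely imaginary point and read off the asymptotics from the $q$-expansions at the transformed cusp. Put $\tau=i\varepsilon/(2\pi)$, so that $q=e^{2\pi i\tau}=e^{-\varepsilon}$. Then $-1/(13\tau)=2\pi i/(13\varepsilon)$, and the transformed nome is $\tilde q=e^{-4\pi^2/(13\varepsilon)}$. The law $S_{13}^2=I$ lets us invert \eqref{eq:main}:
\[
 \mathbf H(\tau)=S_{13}\mathbf H\!\left(-\frac1{13\tau}\right).
\]
The left side is $e^{\varepsilon/6}\bigl(P_0(e^{-\varepsilon}),P_1(e^{-\varepsilon})\bigr)^T$, since $q^{-1/6}=e^{\varepsilon/6}$. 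The right side is $S_{13}\,\tilde q^{-1/6}\bigl(P_0(\tilde q),P_1(\tilde q)\bigr)^T$, and
\[
 \tilde q^{-1/6}=\exp\bigl(4\pi^2/(78\varepsilon)\bigr)=\exp\bigl(2\pi^2/(39\varepsilon)\bigr).
\]
This already yields the exponent $2\pi^2/(39\varepsilon)-\varepsilon/6$ together with the factor $1/\sqrt{13}$ from $S_{13}$.

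Next I expand the products at $\tilde q\to0$. By \eqref{eq:products}, $P_0(\tilde q)=1+\tilde q+O(\tilde q^2)$ and $P_1(\tilde q)=1+O(\tilde q^2)$. Hence
\[
 S_{13}\binom{P_0(\tilde q)}{P_1(\tilde q)}
 =\frac1{\sqrt{13}}\binom{\gamma+\beta}{\alpha-\gamma}+O(\tilde q).
\]
The relation $\alpha=\beta+\gamma$, recorded after \eqref{eq:matrixratios}, gives $\gamma+\beta=\alpha$ and $\alpha-\gamma=\beta$. The leading vector is therefore $(\alpha,\beta)^T/\sqrt{13}$, and the error is $O(\tilde q)=O(e^{-4\pi^2/(13\varepsilon)})$. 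Because $\alpha,\beta\neq0$, this is a genuine relative error, so \eqref{eq:productasy} follows. The only care needed is to record that the error term is uniform, since $P_j(\tilde q)$ are convergent power series in $\tilde q$ with bounded tails near $0$.

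For $P_2$ I use the first relation in \eqref{eq:theta}, namely $qP_2=P_0-P_1$. At $q=e^{-\varepsilon}$ this gives
\[
 P_2(e^{-\varepsilon})=e^{\varepsilon}\bigl(P_0-P_1\bigr)(e^{-\varepsilon})
 =\frac{e^{\varepsilon}}{\sqrt{13}}\exp\!\left(\frac{2\pi^2}{39\varepsilon}-\frac\varepsilon6\right)\bigl(\alpha-\beta+O(\tilde q)\bigr).
\]
Since $\alpha-\beta=\gamma\ne0$ and $e^{5\varepsilon/6}\to1$, we get $P_2\sim\gamma e^{\Lambda/\varepsilon}/\sqrt{13}$ with $\Lambda=2\pi^2/39$. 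I would add a remark that this agrees with \eqref{eq:cubiclambda}.

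The main obstacle is sign and branch bookkeeping, so I would pin three points down.
\begin{itemize}
\item $q^{-1/6}$ means $e^{-2\pi i\tau/6}$, which is real and positive on the imaginary axis, so no root of unity from the multiplier intervenes.
\item The matrix entries are real. $P_j(e^{-\varepsilon})>0$ forces the leading coefficients to be positive; this is consistent with $\alpha,\beta,\gamma>0$. That positivity can be checked from $\alpha e=\gamma$ with $0<e<1$ and from $\beta=3-e^2>0$.
\item The inversion of \eqref{eq:main} rather than its direct use must be stated correctly.
\end{itemize}
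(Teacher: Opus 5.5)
Your proposal is correct and is essentially the paper's proof. Both evaluate at $\tau=i\varepsilon/(2\pi)$, use $\mathbf H(\tau)=S_{13}\mathbf H(-1/(13\tau))$, and expand the products at the transformed cusp to get $\tilde q^{-1/6}\{(1,1)^T+O(\tilde q)\}$. Both then use $S_{13}(1,1)^T=(\alpha,\beta)^T/\sqrt{13}$, which follows from $\alpha=\beta+\gamma$, and obtain the $P_2$ asymptotic from $qP_2=P_0-P_1$ together with $\alpha-\beta=\gamma>0$.
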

\begin{proof}
Set $\tau=i\varepsilon/(2\pi)$ and
$Q=e^{2\pi iW\tau}=e^{-4\pi^2/(13\varepsilon)}$.
The product definitions imply
\[
 \mathbf H(W\tau)=Q^{-1/6}\left\{\binom11+O(Q)\right\}.
\]
Apply $\mathbf H(\tau)=S_{13}\mathbf H(W\tau)$ and use
$S_{13}(1,1)^T=(\alpha,\beta)^T/\sqrt{13}$. Removing
$q^{-1/6}=e^{\varepsilon/6}$ proves \eqref{eq:productasy}.
The last assertion follows from $qP_2=P_0-P_1$ in
\eqref{eq:theta} and $\alpha-\beta=\gamma>0$.
\end{proof}
Thus the finite Rogers certificate and the Fricke cusp expansion give the
same exponential $\Lambda=2\pi^2/39$. They also give the same algebraic
arguments through \eqref{eq:matrixratios}. This compatibility does not
identify the independently defined sums with the products; that is still
Conjecture~\ref{conj:products}. If the conjecture holds, its normalized
sum vector inherits Theorem~\ref{thm:main} immediately.

\subsection{Higher conditions for two summands}\label{sec:higher}
Linear terms do not change $\Lambda$, but enter the higher asymptotic
conditions \cite{Kanade,Mizuno,VlasenkoZwegers}. Suppose
\[
 F_i(e^{-\varepsilon})=K_i e^{\Lambda/\varepsilon}
 \exp\bigl(\beta_i\varepsilon+\gamma_i\varepsilon^2+
 \delta_i\varepsilon^3+O(\varepsilon^4)\bigr),\qquad K_i>0.
\]
For $H(q)=F_1(q)+q^kF_2(q)$, put
\[
 p=\frac{K_1}{K_1+K_2},\qquad p'=1-p,\qquad
 \Delta=\beta_2-k-\beta_1.
\]
Write
\[
 \log H(e^{-\varepsilon})=
 \frac{\Lambda}{\varepsilon}+\log(K_1+K_2)
 +A_1\varepsilon+A_2\varepsilon^2+A_3\varepsilon^3+O(\varepsilon^4).
\]
\begin{proposition}\label{prop:higher}
Under these assumptions,
\begin{align}
 A_2&=p\gamma_1+p'\gamma_2+\frac12pp'\Delta^2,
 \label{eq:secondasy}\\
 A_3&=p\delta_1+p'\delta_2+pp'\Delta(\gamma_2-\gamma_1)
 +\frac16pp'(p-p')\Delta^3.\label{eq:thirdasy}
\end{align}
\end{proposition}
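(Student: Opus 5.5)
We factor out the common exponential and prefactor, and reduce the claim to a cumulant expansion of a two-point mixture. Write $F_1(e^{-\varepsilon})+e^{-k\varepsilon}F_2(e^{-\varepsilon})$ as
\[
 K_1e^{\Lambda/\varepsilon}e^{g_1(\varepsilon)}+K_2e^{\Lambda/\varepsilon}e^{g_2(\varepsilon)-k\varepsilon},
 \qquad g_i=\beta_i\varepsilon+\gamma_i\varepsilon^2+\delta_i\varepsilon^3+O(\varepsilon^4).
\]
Removing the common factor $e^{\Lambda/\varepsilon}(K_1+K_2)e^{g_1}$ gives
\[
 \log H=\frac{\Lambda}{\varepsilon}+\log(K_1+K_2)+g_1+\log\bigl(p+p'e^{X}\bigr),
 \qquad X=g_2-g_1-k\varepsilon=\Delta\varepsilon+(\gamma_2-\gamma_1)\varepsilon^2+(\delta_2-\delta_1)\varepsilon^3+O(\varepsilon^4).
\]
Note that $K_i>0$ guarantees $p,p'>0$, so the logarithm is analytic near $X=0$. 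The term $A_1$ is not part of the statement, but it comes out as $\beta_1+p'\Delta=p\beta_1+p'(\beta_2-k)$, which serves as a consistency check.

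The key step is to view $\log(p+p'e^{X})$ as the cumulant generating function of a Bernoulli($p'$) variable evaluated at $X$. Its Taylor series is therefore
\[
 p'X+\tfrac12pp'X^2+\tfrac16pp'(p-p')X^3+O(X^4).
\]
The cubic coefficient is the third cumulant $p'(1-p')(1-2p')=pp'(p-p')$. If one prefers not to appeal to cumulants, this coefficient can be checked directly by differentiating $\log(p+p'e^X)$ three times at $0$.

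Next we substitute the expansion of $X$ and collect powers of $\varepsilon$.

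At order $\varepsilon^2$ the contributions are $\gamma_1$ from $g_1$, $p'(\gamma_2-\gamma_1)$ from the linear term, and $\tfrac12pp'\Delta^2$ from $X^2$. Their sum is $p\gamma_1+p'\gamma_2+\tfrac12pp'\Delta^2$, which is \eqref{eq:secondasy}.

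At order $\varepsilon^3$ the contributions are:
\begin{itemize}
 \item $\delta_1$ from $g_1$;
 \item $p'(\delta_2-\delta_1)$ from the linear term;
 \item $\tfrac12pp'\cdot2\Delta(\gamma_2-\gamma_1)$ from the cross term of $X^2$;
 \item $\tfrac16pp'(p-p')\Delta^3$ from $X^3$.
\end{itemize}
Together these give \eqref{eq:thirdasy}.

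The only delicate point is the error bookkeeping. Since $X=O(\varepsilon)$, the $O(X^4)$ remainder and the $O(\varepsilon^4)$ tails of $g_i$ contribute only at order $\varepsilon^4$. Everything else is routine algebra, with the cubic-cumulant coefficient being the one step worth checking carefully.
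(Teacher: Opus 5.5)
Your proof is correct and follows the same route as the paper: factor out $(K_1+K_2)e^{\Lambda/\varepsilon}e^{g_1}$, expand $\log\bigl(p+p'e^{X}\bigr)$ in powers of $X=\Delta\varepsilon+(\gamma_2-\gamma_1)\varepsilon^2+(\delta_2-\delta_1)\varepsilon^3+O(\varepsilon^4)$, add back $g_1$, and collect the coefficients of $\varepsilon^2$ and $\varepsilon^3$. Your Bernoulli-cumulant reading of the Taylor coefficients $p'$, $\tfrac12pp'$, $\tfrac16pp'(p-p')$ is simply a convenient record of the coefficients that the paper's one-line proof leaves implicit.
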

\begin{proof}
After factoring out the first exponential, expand
\[
 \log\!\left(p+p'\exp\bigl(\Delta\varepsilon+
 (\gamma_2-\gamma_1)\varepsilon^2+
 (\delta_2-\delta_1)\varepsilon^3+O(\varepsilon^4)\bigr)\right).
\]
Adding the exponent belonging to $F_1$ and collecting terms gives the
formulas.
\end{proof}
A weight-zero modular function meromorphic at the relevant cusp has only
exponentially small corrections after its leading cusp term. Up to a
$q$-normalization, this requires $A_2=A_3=\cdots=0$. The amplitude ratio
and relative shift enter these equations, so a failed modularity test for an
individual summand need not rule out a modular sum of two constituents.
We do not claim exact vanishing of these higher coefficients for the
conjectures below.

\section{Rational, quadratic and cubic instances of the construction}
\label{sec:examples}
We compare successful and obstructed inputs to the same construction,
ordered by algebraic degree. Each row supplies a Rogers relation and
explicit monomial data; the matrix, the radial action and the modularity
test are then separate steps. In particular, the quadratic field alone
does not predict success: the two examples over $\QQ(\sqrt2)$ below
have different matrices and different outcomes.

Table~\ref{tab:rogers} is a comparison, not a classification. Here
$K_9=\QQ(e^{2\pi i/9})^+$ and $K=\QQ(\rho)$. The column $m$ denotes
the second denominator step, not the product modulus. Complementation
is used when needed to obtain positive cross terms. The symbols $x,y$
in the four additional quadratic rows are defined in
\eqref{eq:quadratic-inputs}; the cubic conductor-nine parameters are
given in \eqref{eq:kanadedilog}.
\begin{table}[htbp]
\caption{The same rank-two construction across algebraic degrees.
Each relation has the form $mL(x)+L(y)=c_0\pi^2$.
The outcome concerns the specified saddle system.}
\label{tab:rogers}
\centering\small
\renewcommand{\arraystretch}{1.22}
\begin{tabularx}{\textwidth}{@{}clc>{\raggedright\arraybackslash}p{0.34\textwidth}>{\raggedright\arraybackslash}X@{}}\toprule
Degree&Field&$m$&Rogers relation&Outcome\\\midrule
1&$\QQ$&3&$3L(1/4)+L(1/9)=\pi^2/6$&Capparelli products; proved after complementation\\
2&$\QQ(\sqrt2)$&4&$4L(1/\sqrt2)+L(2(\sqrt2-1))=7\pi^2/12$&G\"ollnitz--Gordon products; proved expressions\\
2&$\QQ(\sqrt5)$&2&$2L(x)+L(y)=2\pi^2/15$&Rogers--Ramanujan products; proved after complementation\\
2&$\QQ(\sqrt3)$&8&$8L(x)+L(y)=13\pi^2/12$&Individual modular sums excluded by Theorem~\ref{thm:quadratic-obstruction}\\
2&$\QQ(\sqrt2)$&6&$6L(x)+L(y)=11\pi^2/24$&Individual modular sums excluded by Theorem~\ref{thm:quadratic-obstruction}\\
2&$\QQ(\sqrt5)$&4&$4L(x)+L(y)=\pi^2/3$&Indefinite matrix; no modular evaluation established\\
3&$K_9$&3&$3L(Q_1)+L(Q_2^3)=4\pi^2/9$&Three symmetric Kanade--Russell evaluations, proved in \cite{MizunoKR,Xia}\\
3&$K$&13&$13L(\rho)+L(\sigma)=5\pi^2/3$&Proved Fricke law; conjectural sum evaluations\\\bottomrule
\end{tabularx}
\end{table}

\subsection{The rational Capparelli example}
The identity
\begin{equation}\label{eq:cappdilog}
 3L(1/4)+L(1/9)=\frac{\pi^2}{6}
\end{equation}
appears in Kanade's analysis \cite[Section~3]{Kanade}. For completeness,
the five-term identity at $(1/2,1/2)$ and $(1/3,1/3)$ gives
\[
 L(1)=L(1/4)+2L(1/3),\qquad
 2L(1/3)=L(1/9)+2L(1/4).
\]
For $z_0=1/4$, $z_1=1/9$, the complement equations are
\[
 1-z_0=z_0z_1^{-1/2},\qquad 1-z_1=z_0^{-3/2}z_1.
\]
Thus
\[
 A_0=\begin{pmatrix}1&-1/2\\-3/2&1\end{pmatrix},\qquad
 D_0=\operatorname{diag}(1,3),\qquad \det(A_0D_0)=\frac34>0.
\]
For $x=1-z_0=3/4$ and $y=1-z_1=8/9$, inverting the logarithmic
system gives
\[
 1-x=x^4y^2,\qquad 1-y=x^6y^4,\qquad
 A_C=A_0^{-1}=\begin{pmatrix}4&2\\6&4\end{pmatrix}.
\]
The familiar positive-cross-term quadratic form is therefore
$2r^2+6rs+6s^2$. A proved identity is \cite{Kanade,KursungozCapp}
\begin{equation}\label{eq:cappsum}
 \sum_{r,s\geq0}\frac{q^{2r^2+6rs+6s^2}}{(q;q)_r(q^3;q^3)_s}
 =\frac1{(q^2,q^3,q^9,q^{10};q^{12})_\infty}.
\end{equation}
Its leading constant is
$L(1/4)+\tfrac13L(1/9)=\pi^2/18$.
For the single-summand family with added exponent $B_1r+B_2s+C$,
Kanade's higher necessary conditions select $B_1=B_2=0$ and $C=-1/24$
\cite[Section~3]{Kanade}. This illustrates why the Rogers value does not
select linear terms, even when the product identity is known.

\subsection{The quadratic G\"ollnitz--Gordon example}
The G\"ollnitz--Gordon row of Table~\ref{tab:rogers} uses
\begin{equation}\label{eq:GGdilog}
 4L(1/\sqrt2)+L\bigl(2(\sqrt2-1)\bigr)=\frac{7\pi^2}{12}.
\end{equation}
For $x=1/\sqrt2$ and $y=2(\sqrt2-1)$, direct algebra gives
$1-x=x^3y$ and $1-y=x^4y^2$. Hence
\[
 A_{GG}=\begin{pmatrix}3&1\\4&2\end{pmatrix},\quad
 D_{GG}=\operatorname{diag}(1,4),\quad
 Q_{GG}(r,s)=\frac32r^2+4rs+4s^2.
\]
An identity of Kur\c{s}ung\"oz, also proved by Wang
\cite{KursungozCapp,WangDouble}, is
\begin{equation}\label{eq:GGtwo}
 \sum_{r,s\geq0}
 \frac{q^{(3r^2+r)/2+4rs+4s^2+2s}(1+q^{2r+4s+1})}
 {(q;q)_r(q^4;q^4)_s}
 =\frac1{(q,q^5,q^6;q^8)_\infty}.
\end{equation}
This provides an established two-summand precedent. Conversely, it also
proves \eqref{eq:GGdilog} using \eqref{eq:lambda}: both positive summands
have the same leading exponential, and the product permits three residues
modulo $8$, so
\[
 L(1)-L(x)+\frac14\bigl(L(1)-L(y)\bigr)=\frac{\pi^2}{16}.
\]
Rearrangement gives the displayed Rogers value.

The same product has the single-summand expression
\cite[Theorem~3.17]{LiWang}
\begin{equation}\label{eq:GGone}
 \sum_{r,s\geq0}
 \frac{q^{(3r^2-r)/2+4rs+4s^2+2s}}{(q;q)_r(q^4;q^4)_s}
 =\frac1{(q,q^5,q^6;q^8)_\infty}.
\end{equation}
Its negative linear coefficient explains why our finite comparison
allows negative coefficients. A search confined to nonnegative linear
terms could miss an available one-summand expression.

\subsection{Four further quadratic inputs and their matrices}
\label{sec:quadratic-inputs}
To test the same construction beyond the preceding successful examples,
consider the four quadratic evaluations $mL(x)+L(y)=c_0\pi^2$ with
\begin{equation}\label{eq:quadratic-inputs}
\begin{array}{c|c|c|c}
 m&x&y&c_0\\\hline
 8&\sqrt3-1&(2+\sqrt3)/4&13/12\\[2pt]
 6&\sqrt2-1&(2-\sqrt2)/4&11/24\\[2pt]
 4&\sqrt5-2&4\sqrt5-8&1/3\\[2pt]
 2&(5-\sqrt5)/10&(7-3\sqrt5)/2&2/15
\end{array}
\end{equation}
The Rogers evaluations are comparison inputs here. The matrix
conversions and the obstruction theorem below depend only on the
displayed algebraic arguments, not on a numerical dilogarithm test.
For $w=1-x$, direct algebra gives
\begin{equation}\label{eq:quadratic-monomials}
\begin{array}{c|cc|c|c}
 m&y&1-y&A&\det(AD)\\\hline
 8&x^{-4}w&x^{-4}w^3&
 \left(\begin{smallmatrix}4&1\\8&3\end{smallmatrix}\right)&32\\[3pt]
 6&x^4w^{-3}&x^2w^{-3}&
 \left(\begin{smallmatrix}4/3&-1/3\\-2&1\end{smallmatrix}\right)&4\\[3pt]
 4&x^{-1/3}w^2&x^2&
 \left(\begin{smallmatrix}1/6&1/2\\2&0\end{smallmatrix}\right)&-4\\[3pt]
 2&x^2w^{-2}&x^{1/2}w^{-3/2}&
 \left(\begin{smallmatrix}1&-1/2\\-1&3/4\end{smallmatrix}\right)&1/2
\end{array}
\end{equation}
All fractional powers are positive real roots. The pair $x,w$ is
multiplicatively independent in each case: for $m=8,6,4$ the norms
separate a nontrivial unit from a nonunit; for $m=2$ both norms are
$1/5$, and $x/w\ne1$ excludes the remaining relation. Hence the
matrices in \eqref{eq:quadratic-monomials} are the unique ones supplied
by Proposition~\ref{prop:monomial-matrix} for the ordered arguments.

\subsection{The quadratic Rogers--Ramanujan case}
For $m=2$, complementing both arguments in
\eqref{eq:quadratic-inputs} gives
\[
 z_1=\frac{5+\sqrt5}{10},\qquad z_2=\frac{3\sqrt5-5}{2},\qquad
 A_2=\begin{pmatrix}3&2\\4&4\end{pmatrix},\quad D_2=\operatorname{diag}(1,2).
\]
Indeed $1-z_1=z_1^3z_2^2$ and $1-z_2=z_1^4z_2^4$. This matrix
occurs in Mizuno's list \cite[(49)]{Mizuno}, and gives
\begin{align}
 \sum_{r,s\geq0}\frac{q^{(3r^2-r)/2+4rs+4s^2}}
 {(q;q)_r(q^2;q^2)_s}
 &=\frac1{(q,q^4;q^5)_\infty},\label{eq:quadRR1}\\
 \sum_{r,s\geq0}\frac{q^{(3r^2+r)/2+4rs+4s^2+2s}}
 {(q;q)_r(q^2;q^2)_s}
 &=\frac1{(q^2,q^3;q^5)_\infty}.\label{eq:quadRR2}
\end{align}
For a direct reduction, Euler's identities give
\[
 \frac{(-t;q)_\infty}{(t^2;q^2)_\infty}=\frac1{(t;q)_\infty},\qquad
 \sum_{r+2s=n}\frac{q^{r(r-1)/2}}{(q;q)_r(q^2;q^2)_s}
 =\frac1{(q;q)_n}.
\]
Multiplying the second equality by $q^{n^2}$ or $q^{n^2+n}$ and
summing gives the classical Rogers--Ramanujan series. Thus this
quadratic input gives known rank-two representations of the
two-component modulus-five vector. Their leading action also proves
the $m=2$ Rogers evaluation in \eqref{eq:quadratic-inputs}.

\subsection{Quadratic obstructions: failure at different stages}
\label{sec:quadratic-obstructions}
For $m=8$, retain the original arguments; for $m=6$, complement both.
The positive-cross-term matrices and their quadratic exponents are
\begin{align}
 A_8&=\begin{pmatrix}4&1\\8&3\end{pmatrix},&
 Q_8(r,s)&=2r^2+8rs+12s^2,\label{eq:quadA8}\\
 A_6&=\begin{pmatrix}3/2&1/2\\3&2\end{pmatrix},&
 Q_6(r,s)&=\tfrac34r^2+3rs+6s^2,\label{eq:quadA6}
\end{align}
with $D_m=\operatorname{diag}(1,m)$. Both $A_mD_m$ are positive
definite. The second fails the full-lattice integrality test, since its
first diagonal entry is $3/2$. Even when fractional exponents are
allowed, however, both systems fail a necessary modularity condition.

\begin{samepage}
\begin{theorem}\label{thm:quadratic-obstruction}
For $m\in\{6,8\}$ and $A\in\{A_m,A_m^{-1}\}$, no rational
$a,b,c$ make
\[
 q^c\sum_{r,s\geq0}
 \frac{q^{\frac12(r,s)AD_m(r,s)^T+ar+bs}}
 {(q;q)_r(q^m;q^m)_s}
\]
a meromorphic modular function of weight zero for a congruence
subgroup, allowing a finite-order multiplier.
\end{theorem}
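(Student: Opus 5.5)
We argue by contradiction, using the radial asymptotics of Section~\ref{sec:radial-action}. Suppose that for some rational $a,b,c$ the normalized series $F(q)$ is a weight-zero meromorphic modular function for a congruence subgroup, with finite-order multiplier.

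\emph{Step 1: what modularity forces.} Since $F$ has positive coefficients and grows as $q\to1^-$, the cusp $0$ is a pole of $F$. Apply a matrix sending $0$ to $\infty$ and expand at $\infty$ in a local parameter $q^{1/N}$. This gives
\[
 \log F(e^{-\varepsilon})=\frac{\Lambda}{\varepsilon}+\log K+O\bigl(e^{-\kappa/\varepsilon}\bigr)\qquad(\kappa>0).
\]
So every coefficient of $\varepsilon^j$ with $j\geq1$ vanishes. This is the remark after Proposition~\ref{prop:higher}, and it holds for a single summand with $H=F$ and no second term.

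\emph{Step 2: compute the coefficients for a single summand.} Use the asymptotic expansion of a symmetrizable Nahm sum \cite{Mizuno,VlasenkoZwegers}, written in the form $K e^{\Lambda/\varepsilon}\exp(\beta\varepsilon+\gamma\varepsilon^2+\cdots)$.
\begin{itemize}
\item The constant $\Lambda$ is given by \eqref{eq:lambda}.
\item The coefficient $\beta$ of $\varepsilon$ is affine in $(a,b,c)$: it equals $-c$ plus a term linear in $(a,b)$, evaluated at the Nahm solution $z$, plus a constant determined by $A$ and $z$. Since $c$ is free, the condition $\beta=0$ only fixes $c$.
\item The coefficient $\gamma$ of $\varepsilon^2$ does not involve $c$. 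It is a quadratic polynomial in $(a,b)$ whose coefficients lie in the number field $\QQ(z)$, here $\QQ(\sqrt3)$ or $\QQ(\sqrt2)$. These coefficients are built from $\log$-free algebraic quantities: $z_i$, $(1-z_i)^{-1}$, and the entries of $(A+\operatorname{diag}(z_i/(1-z_i)))^{-1}$.
\end{itemize}

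\emph{Step 3: show $\gamma=0$ has no rational solution.} For $A_m$, take $z$ to be the arguments from \eqref{eq:quadratic-inputs}, complemented when $m=6$. For $A_m^{-1}$, use the reflected arguments $1-z$, as in the remark after Proposition~\ref{prop:monomial-matrix}. For rational $(a,b)$, write $\gamma(a,b)=g_0(a,b)+g_1(a,b)\sqrt d$ with $g_0,g_1\in\QQ[a,b]$. Then $\gamma=0$ forces $g_0=g_1=0$ at the same rational point. We will show this is impossible. First solve the conic $g_1=0$, or the line if $g_1$ degenerates, and substitute into $g_0$. Then check that the resulting equation has no rational point, either because a discriminant is a non-square or by a local obstruction at a small prime.

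\emph{Step 4: the remaining cases.} The four cases $m\in\{6,8\}$ and $A\in\{A_m,A_m^{-1}\}$ are handled separately, with identical mechanics. If $\gamma=0$ happens to admit a rational solution in some case, we fall back on $\delta$, the $\varepsilon^3$ coefficient. It is cubic in $(a,b)$, and we impose it simultaneously with $\gamma=0$.

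\emph{Main obstacle.} The hardest step is Step 2: deriving $\gamma$ explicitly and correctly for a general symmetrizable pair $(A,D)$ with linear term $(a,b)$. This means expanding the saddle-point integral to second order in $\varepsilon$. The expansion requires the cubic and quartic derivatives of $\Li_2$ at $z$, the Gaussian correction $\det$, and the Euler--Maclaurin corrections from $(q^{d_i};q^{d_i})_n$. I would use the Vlasenko--Zwegers formula, rescaled for $D$ as in Mizuno \cite[Section~2]{Mizuno}, rather than rederive it. I would also check the resulting formula on a known modular case: $A_2,D_2$ from \eqref{eq:quadRR1} must satisfy $\gamma=0$ at $(a,b)=(-1/2,0)$.

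The final rational-point check in Step 3 is a finite algebraic computation in $\QQ(\sqrt d)$.
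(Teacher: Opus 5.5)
There is a genuine gap, and it lies in Step~2. You say the order-$\varepsilon$ condition ``only fixes $c$'' because $c$ is free. But $c$ is not free: the theorem quantifies over rational $c$, and a finite-order multiplier would force $c$ to be rational anyway. Modularity gives $\log F(e^{-\varepsilon})=\Lambda/\varepsilon+\log K+c\,\varepsilon+O(\varepsilon^2)$. So the $\varepsilon$-coefficient $\beta(a,b)$ of the unnormalized sum must itself be rational.

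The saddle point lies in $\QQ(\sqrt3)$ or $\QQ(\sqrt2)$. Hence $\beta(a,b)=r(a,b)+\sqrt d\,P(a,b)$ with $r,P\in\QQ[a,b]$, and modularity forces $P(a,b)=0$. This single condition is the paper's entire obstruction. The Gaussian formula of Appendix~\ref{app:quad-formula} gives $P$ explicitly, and completing the square turns $P=0$ into one of two equations:
\begin{itemize}
\item $X^2+3Y^2=220Z^2$, for both $A_8^{\pm1}$;
\item $X^2-21Y^2=NZ^2$ with $N\in\{-145,2900\}$, for $A_6^{\pm1}$.
\end{itemize}
Congruences modulo $5$ and $3$ exclude these. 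Note also that $\beta$ is not affine in $(a,b)$. The term $\tfrac12\,\mathbb E[(\sum_i g_iU_i)^2]=\tfrac12 g^TCg$, with $g_i=-b_i/d_i-w_i/2$, makes it quadratic, which is why a conic appears.

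Having discarded the order-$\varepsilon$ condition, your Step~3 rests on a coefficient $\gamma$ you have not computed, and you misdescribe its shape. In general the order-$\varepsilon^k$ coefficient of $\log F$ has degree $k+1$ in the linear terms. At order $\varepsilon^2$, three linear-term insertions joined at the cubic phase vertex $T_i$ give a cubic contribution. Compare the rank-one case $\log(-q^{1/2+B};q)_\infty$: there Euler--Maclaurin produces $B_3(\tfrac12+B)$ at order $\varepsilon^2$. So $g_0=g_1=0$ is an intersection of two cubics, not a conic and a line, and your ``solve the conic and substitute'' procedure does not apply. Likewise, the fallback $\delta$ is quartic, not cubic.

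More seriously, nothing in the proposal shows that these cubics, or the $\delta$-conditions after them, have no common rational zero. The sentence ``we will show this is impossible'' describes an open-ended computation with no guarantee that it ends in a contradiction. As written, the argument does not close. Using the rationality of $c$ at first order removes the need for any of the higher-order work.
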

\end{samepage}
The proof is in Appendix~\ref{app:quadratic-obstruction}. The first
radial correction would have to be rational; exact computation reduces
this requirement for $A_8$ and $A_6$ to rational solutions of
\[
 X^2+3Y^2=220,\qquad X^2-21Y^2=-145,
\]
respectively. Congruences modulo five and three exclude those solutions.
The inverse matrices give the same local obstructions. The method is
the asymptotic modularity test of \cite{Mizuno,VlasenkoZwegers}.

The coefficient-$4$ row in \eqref{eq:quadratic-monomials} fails earlier:
its quadratic form $r^2/12+2rs$ has indefinite matrix, as does its
inverse. This places it outside the positive-definite construction.
It does not prove nonmodularity of every convergent series: with an
added term $ar+bs$, the original sum converges for $b>0$.
No modular evaluation for that case is asserted here.

These examples show why a two-term Rogers relation alone does not
produce a modular sum. They do not classify the corresponding fields.
Moreover, Theorem~\ref{thm:quadratic-obstruction} concerns individual
unrestricted sums; it does not exclude a modular combination of them.
For a positive combination, the first correction is an amplitude-weighted
average of the constituent corrections. Its irrational parts can cancel,
so the combined conditions of Section~\ref{sec:higher} must be tested.
This is the relevant distinction for the level-$13$ conjectures.

\subsection{The cubic conductor-nine model}
The nearest motivating symmetrizable system has
\[
 A_9=\begin{pmatrix}2&1\\3&2\end{pmatrix},\qquad
 D_9=\operatorname{diag}(1,3),\qquad
 Q_9(r,s)=r^2+3rs+3s^2.
\]
The three symmetric Kanade--Russell identities, studied in
\cite{Kanade,KursungozKR,Mizuno} and now proved in
\cite[Theorem~1.1]{MizunoKR} and \cite[Theorem~1.1]{Xia},
use linear terms $(0,0)$, $(1,3)$, and $(2,3)$, with product supports
\[
 \{1,3,6,8\},\qquad \{2,3,6,7\},\qquad \{3,4,5,6\}\pmod9.
\]
Kanade's asymptotic calculation leads to
\begin{equation}\label{eq:kanadedilog}
 L(Q_1)+\frac13L(Q_2^3)=\frac{4\pi^2}{27},\qquad
 Q_1=1-2\sin\frac\pi{18},\quad
 Q_2^3=4\sin^2\frac\pi{18}+4\sin\frac\pi{18}.
\end{equation}
This is the model for changing both the cubic field and the symmetrizer.
The present results do not depend on the sum--product formulas in this
setting or on a particular proof of \eqref{eq:kanadedilog}.
The modulus-nine supports overlap in $\{3,6\}$, whereas the cosets
\eqref{eq:cosets} partition the nonzero residues. The latter partition
explains the product-norm identity in \eqref{eq:theta}; it is not a general criterion for
sum--product identities.

The proved modulo-nine evaluations therefore provide a complete
sum--product model for the comparison. The level-$13$ evaluations in
Conjecture~\ref{conj:products} remain open. In both systems the product
modulus and the second denominator step are distinct pieces of data.

\subsection{A four-term identity and its rank-four companion}
\label{sec:AG}
Kirillov's formula \cite[(1.16), with $r=9$ and $j=0$]{Kirillov} gives
\begin{equation}\label{eq:AGdilog}
 \sum_{a=2}^{5}L\!\left(\frac{\sin^2(\pi/11)}{\sin^2(a\pi/11)}\right)
 =\frac8{11}L(1)=\frac{4\pi^2}{33}.
\end{equation}
To identify the associated Nahm system, put
\[
 \vartheta=\frac\pi{11},\qquad
 y_i=\frac{\sin^2\vartheta}{\sin^2((i+1)\vartheta)},\qquad
 z_i=1-y_i\quad(1\leq i\leq4).
\]
Then
\begin{equation}\label{eq:AGmatrix}
 1-z_i=\prod_{j=1}^4z_j^{2\min(i,j)},\qquad
 A_{11}^{AG}=2\begin{pmatrix}1&1&1&1\\1&2&2&2\\1&2&3&3\\1&2&3&4\end{pmatrix},
 \qquad D=I_4.
\end{equation}
The complement equations follow from
$\sin^2((i+1)\vartheta)-\sin^2\vartheta
=\sin(i\vartheta)\sin((i+2)\vartheta)$ and telescoping.
The matrix is positive definite, since its half has entries
$\min(i,j)$ and is the Gram matrix of the initial-segment vectors.

The associated proved series are the five modulus-$11$ Andrews--Gordon
identities \cite{Andrews}. Put $N_j=n_j+\cdots+n_4$ and interpret the
linear term as zero when $a=5$. Then
\begin{equation}\label{eq:AGsum}
 \mathcal A_a(q):=
 \sum_{n_1,\ldots,n_4\geq0}
 \frac{q^{N_1^2+\cdots+N_4^2+N_a+\cdots+N_4}}
 {\prod_{j=1}^4(q;q)_{n_j}}
 =\frac{(q^a,q^{11-a},q^{11};q^{11})_\infty}{(q;q)_\infty},
 \quad 1\leq a\leq5.
\end{equation}
Here $\tfrac12\mathbf n^TA_{11}^{AG}\mathbf n=\sum_jN_j^2$.
Equations \eqref{eq:lambda} and \eqref{eq:AGdilog} give
$\Lambda_{11}^{AG}=\sum_iL(1-z_i)=4\pi^2/33$, consistent with the eight
permitted residues on the product side.

For the series \eqref{eq:AGsum}, put
\[
 \chi_a(\tau)=q^{\kappa_a}\mathcal A_a(q),\qquad
 \kappa_a=\frac{(11-2a)^2}{88}-\frac1{24},\qquad
 \boldsymbol\chi=(\chi_1,\ldots,\chi_5)^T.
\]
In this ordering the classical inversion formula is
\begin{equation}\label{eq:AGtransform}
 \boldsymbol\chi(-1/\tau)=S_{11}^{AG}\boldsymbol\chi(\tau),\qquad
 (S_{11}^{AG})_{ab}=\frac2{\sqrt{11}}(-1)^{a+b}
 \sin\frac{2\pi ab}{11}\quad(1\leq a,b\leq5).
\end{equation}
This is the Andrews--Gordon sine matrix; see also
\cite[Section~4]{Ono} for the general family and conventions.
Its fifth row is positive:
\[
 (S_{11}^{AG})_{5b}=\frac2{\sqrt{11}}\sin\frac{b\pi}{11}.
\]
Consequently the four arguments in \eqref{eq:AGdilog} satisfy
\begin{equation}\label{eq:AGratios}
 y_i=\left(\frac{(S_{11}^{AG})_{51}}{(S_{11}^{AG})_{5,i+1}}\right)^2,
 \qquad 1\leq i\leq4.
\end{equation}
Also $\kappa_5=-1/33$, so the dominant transformed term yields
$\Lambda_{11}^{AG}=4\pi^2/33$. This classical example exhibits a Rogers
identity, its Nahm system and proved products, and a modular matrix whose
row ratios recover its Rogers arguments.

More generally, the $j=0$ specialization of Kirillov's formula at odd
modulus $M\geq5$ has $(M-3)/2$ terms and value $(M-3)L(1)/M$.
The corresponding classical Andrews--Gordon family has rank $(M-3)/2$
and vector dimension $(M-1)/2$. At $M=13$, its five-term value
$10L(1)/13$ accompanies a rank-five, six-component system. Our
rank-two system \eqref{eq:cubicmatrix} has weighted reflected value
$4L(1)/13$, and the associated products have the two-dimensional
Fricke law of Theorem~\ref{thm:main}. The sum--product identification
remains Conjecture~\ref{conj:products}.

The rank-three modulus-$11$ row of Table~\ref{tab:transforms}
is a different Nahm system, despite the similar sine ratios in its
transformation coefficients. Such ratios do not determine the
summation rank or the symmetrizer.

\section{A finite arithmetic source and a restricted three-matrix theorem}
\label{sec:sieve}
The preceding examples used one construction in degrees one, two and
three. We now restrict its source to a specified finite cubic family;
the quadratic obstructions do not enter this classification. The purpose
is to restrict the quadratic data before searching for linear terms.
Three integers must be distinguished:
the coefficient $m$ in the Rogers relation, the conductor $N$ of its
number field when this is abelian, and the modulus $M$ of a proposed
product. These integers need not agree. In particular, the conductor-nine
example has denominator steps $(1,3)$, whereas the coefficient-$31$
identity belongs to the conductor-$19$ field.

\subsection{The scope of the classification input}
The companion manuscript \cite{CubicClassification} studies non-degenerate
integral monomial configurations over cubic fields. In the simplest-cubic
orders, the Vukusic--Ziegler unit-equation theorem \cite{VZ} gives a finite
source of exceptional-unit configurations. The companion manuscript
extracts ten normalized two-term Rogers identities and supplies their
five-term certificates. Its broader totally real cubic reduction also
leaves a second family
\[
 X^3-(s+1)X^2+sX+1,\qquad s\geq4.
\]
Possible nontrivial configurations in that family are confined there to
an explicit finite region, whose emptiness has not been proved.
Thus effective finiteness in that specified cubic monomial class and
completeness of the ten-row simplest-cubic table are distinct assertions.
Neither assertion classifies arbitrary algebraic two-term dilogarithm
equalities or arbitrary modular Nahm sums.

Table~\ref{tab:arithmetic-sieve} records the exact monomial data of
these ten identities in the notation of \eqref{eq:monomial-input}.
The pair $x,1-x$ is multiplicatively independent in each row, so
Proposition~\ref{prop:monomial-matrix} applies whenever $h\ne0$.
We allow simultaneous reflection as described in
Section~\ref{sec:conversion}. Thus the cubic restriction enters through
this finite source, not through the conversion formula.

\begin{table}[htbp]
\caption{The ten normalized simplest-cubic records from
\cite{CubicClassification}. The identity is $mL(x)+L(y)=c_0\pi^2$.
The last column is the formal value $(m+1-6c_0)/m$; it is used as a
radial density only after the positive-definiteness test.}\label{tab:arithmetic-sieve}
\centering\small
\renewcommand{\arraystretch}{1.12}
\begin{tabular}{@{}rrrrrrrrrr@{}}\toprule
Row&$N$&$m$&$c_0$&$p$&$h$&$u$&$v$&$\det(AD)$&$6\Lambda/\pi^2$\\\midrule
1&7&2&$2/7$&$-2$&2&$-1$&2&1&$9/14$\\
2&7&2&$5/21$&2&0&2&$-1$&$\text{--}^{\ast}$&$11/14$\\
3&9&3&$4/9$&$-2$&1&$-1$&2&3&$4/9$\\
4&7&7&$2/3$&$-2$&3&1&2&$-7/3$&$4/7$\\
5&13&13&$5/3$&$-4$&1&$-3$&4&39&$4/13$\\
6&9&19&$4/3$&$-2$&5&3&2&$-57/5$&$12/19$\\
7&7&13&$4/3$&$-4$&3&$-1$&4&$13/3$&$6/13$\\
8&19&31&4&$-6$&1&$-5$&6&155&$8/31$\\
9&7&2&$13/42$&$-1$&1&0&2&0&$4/7$\\
10&7&97&$15/2$&$-8$&11&3&8&$-291/11$&$53/97$\\\bottomrule
\end{tabular}
\par\smallskip
\begin{minipage}{0.96\textwidth}\footnotesize
$^{\ast}$Row 2 has $h=0$, so the conversion and determinant formula
$\det(AD)=-mu/h$ do not apply. Here $y=x^2$ and $x,1-x$ are
multiplicatively independent; this orientation has no rational Nahm
matrix, and its simultaneous reflection is singular.
\end{minipage}
\end{table}
The values of $c_0$ in this table are established in the companion
manuscript; the three-matrix exclusion below uses only the displayed
integer exponent records. The conductor-$13$ evaluation needed for our
principal example is proved independently in Appendix~\ref{app:certificate}.

\subsection{The restricted cubic sieve}
\begin{theorem}[Restricted three-matrix sieve]\label{thm:three-matrix}
Take one of the ten records of Table~\ref{tab:arithmetic-sieve}, or its
simultaneous reflection, and require:
\begin{enumerate}
\item the coefficient $m$ is greater than two;
\item its ordered arguments give a symmetrizable Nahm system with
denominator steps $(1,m)$ and positive cross term;
\item the quadratic matrix is positive definite;
\item its quadratic exponent can be made integer-valued by a rational
linear term, without restricting the summation lattice or replacing $q$.
\end{enumerate}
Then the only quadratic forms are
\begin{equation}\label{eq:three-forms}
\begin{array}{c|c|c|c}
m&A&\tfrac12(r,s)AD(r,s)^T&6\Lambda/\pi^2\\\hline
3&\left(\begin{smallmatrix}2&1\\3&2\end{smallmatrix}\right)
 &r^2+3rs+3s^2&4/9\\[2pt]
13&\left(\begin{smallmatrix}4&1\\13&4\end{smallmatrix}\right)
 &2r^2+13rs+26s^2&4/13\\[2pt]
31&\left(\begin{smallmatrix}6&1\\31&6\end{smallmatrix}\right)
 &3r^2+31rs+93s^2&8/31
\end{array}
\end{equation}
This is a classification of the indicated quadratic data within the
specified ten-record source, not a modularity theorem for their sums.
\end{theorem}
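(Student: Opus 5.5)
The plan is a direct finite verification. Proposition~\ref{prop:monomial-matrix} turns each exponent record into its unique Nahm matrix and gives sign and integrality criteria, so everything reduces to checking signs and integrality of the displayed integers. Condition~(1) removes rows $1$, $2$, and $9$ of Table~\ref{tab:arithmetic-sieve}, since they have $m=2$. Row~$2$, with $h=0$, needs no separate treatment. This leaves rows $3,4,5,6,7,8,10$, each in its given orientation and under simultaneous reflection.

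Independence of $x,1-x$ is recorded for every row, so for $h\ne0$ Proposition~\ref{prop:monomial-matrix} gives the unique matrix
\[
 AD=\frac1h\begin{pmatrix}-p&m\\ m&mv\end{pmatrix}.
\]
Conditions~(2) and~(3) hold together exactly when $h>0$, $p<0$, and $u<0$. Condition~(4) holds exactly when $-p/h$, $m/h$, and $mv/h$ are all integers.

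For a reflected record, Section~\ref{sec:conversion} replaces $(p,h,u,v)$ by $(v,u,h,p)$. The same criteria then require $u>0$, $v<0$, and $h<0$. Every remaining row has $h>0$, so every reflection fails at once. This avoids computing any inverse matrices. One should still confirm that reflected arguments keep independence of $x,1-x$; this is immediate because $(x,1-x)$ is only swapped.

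For the original orientations, the sign test $u<0$ rules out rows $4$, $6$, and $10$, where $u\in\{1,3,3\}$. Rows $3$, $5$, $7$, and $8$ pass all sign conditions. Integrality then separates them:
\begin{itemize}
\item Row~$7$ has $h=3$, which gives $AD=\tfrac13\bigl(\begin{smallmatrix}4&13\\13&52\end{smallmatrix}\bigr)$. This is not integral, so row~$7$ fails condition~(4).
\item Rows $3$, $5$, and $8$ have $h=1$ and $p=-v$. They give $A=\bigl(\begin{smallmatrix}v&1\\ m&v\end{smallmatrix}\bigr)$ with $(m,v)=(3,2),(13,4),(31,6)$, which are the matrices of \eqref{eq:three-forms}.
\end{itemize}
Evaluating $\tfrac12(r,s)AD(r,s)^T$ for these three gives the listed forms. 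The density column is copied from Table~\ref{tab:arithmetic-sieve} by \eqref{eq:weightedlambda}.

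The main obstacle is not arithmetic but bookkeeping. One must make sure that reflection is the only admitted reorientation, and that the uniqueness clause of Proposition~\ref{prop:monomial-matrix} really excludes other exponent choices. Both points follow from the stated independence of $x,1-x$ in every row. I would finish by stating explicitly that the values $c_0$ are used only for the informational last column.
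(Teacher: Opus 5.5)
Your proposal is correct and essentially follows the paper's proof: your sign test $u<0$ is the paper's negative-determinant exclusion of rows 4, 6 and 10, and you remove row 7 by the same non-integral entry $13/3$. The only variation is that you handle reflections through the record swap $(p,h,u,v)\mapsto(v,u,h,p)$ together with $h>0$ in every surviving row, whereas the paper argues that $A\mapsto A^{-1}$ preserves definiteness and flips the off-diagonal sign; the two arguments are equivalent.
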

\begin{proof}
Among rows with $m>2$, rows 4, 6 and 10 have negative determinant.
The remaining rows are 3, 5, 7 and 8. Row 7 has off-diagonal entry
$13/3$ in $AD$, so the integrality criterion excludes it. The other
three give \eqref{eq:three-forms} directly. When $AD$ is invertible,
simultaneous reflection replaces $A$ by $A^{-1}$: take logarithms of
the complement equations and interchange the two sides. Positive
definiteness is preserved, and the off-diagonal entry changes sign.
Thus no reflected positive-definite record adds a positive-cross-term
case. The densities follow from \eqref{eq:weightedlambda}.
\end{proof}
If $m=2$ is admitted, row 1 also survives, with
$A=\left(\begin{smallmatrix}1&1/2\\1&1\end{smallmatrix}\right)$,
$D=\operatorname{diag}(1,2)$, and quadratic exponent
$r^2/2+rs+s^2$. This is the conductor-seven example of
\cite[(42)--(46)]{Mizuno}, whose parity components form the
six-dimensional vector discussed in Section~\ref{sec:related}.
The classical three-dimensional modulus-$7$ vector listed in
Table~\ref{tab:transforms} comes instead from $A_7^{AG}$ and $D=I_2$,
as described in Section~\ref{sec:related}. These two
conductor-seven examples have different quadratic forms and denominator
steps.

Row 9 is singular. Row 2 has $y=x^2$, while $x,1-x$ are independent,
so its displayed orientation has no rational Nahm equation for $1-x$;
its reflection gives a singular matrix. Fractional exponents and
restricted summation lattices belong to a larger class and are not
excluded by this theorem.

\subsection{Product density and the special role of level thirteen}
The density argument applies to a positive finite sum of constituents
sharing the same quadratic datum. Each has the same exponential
$e^{\Lambda/\varepsilon}$ and a positive leading amplitude, so their
leading exponential cannot cancel. If such a sum equals $q^\kappa P_S(q)$,
where $P_S$ is a reciprocal product with each supported residue occurring
once, then
\begin{equation}\label{eq:density-sieve}
 \frac{|S|}{M}=\frac{m+1-6c_0}{m}.
\end{equation}
The power $q^\kappa$ does not change this equation.

\begin{corollary}\label{cor:minimal-coset}
For the three rows in \eqref{eq:three-forms}, the product modulus is
respectively a multiple of $9$, $13$, or $31$. At these smallest
density-compatible moduli the support sizes are $4$, $4$, and $8$.
At modulus $31$, a support of size eight cannot be a multiplicative
subgroup coset. Among the prime minimal moduli in this three-row list,
only $13$ permits the proposed subgroup-coset support.
\end{corollary}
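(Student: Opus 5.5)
The plan is to apply the density equation \eqref{eq:density-sieve} to each row of \eqref{eq:three-forms} and then run an elementary divisibility argument in $(\ZZ/M\ZZ)^\times$. From the last column, $|S|/M$ equals $4/9$, $4/13$ and $8/31$. Each fraction is in lowest terms, since $\gcd(4,9)=\gcd(4,13)=\gcd(8,31)=1$. Because $|S|$ and $M$ are integers with $|S|/M$ equal to such a fraction, $M$ must be a multiple of the reduced denominator, namely $9$, $13$ or $31$. At $M$ equal to that denominator, the support size is the reduced numerator, $4$, $4$ or $8$. The density argument uses only that the positive constituents share the exponential $e^{\Lambda/\varepsilon}$, which the paragraph preceding the corollary already records, so nothing more is needed for this step.

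For the coset statement at $M=31$, I would interpret a ``multiplicative subgroup coset'' as a coset $aH$ of a subgroup $H\le(\ZZ/31\ZZ)^\times$, which excludes the residue $0$, as all the supports $P_S$ do. Since $31$ is prime, $(\ZZ/31\ZZ)^\times$ is cyclic of order $30$. Every coset of $H$ has exactly $|H|$ elements, and Lagrange's theorem gives $|H|\mid 30$. Since $8\nmid 30$, no coset has size eight, and this is the key step.

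To contrast the other rows, I would note that $(\ZZ/13\ZZ)^\times$ has order $12$ and $4\mid12$. The unique subgroup of order four is the group of cubic residues $C_0=\{\pm1,\pm5\}$, and its cosets are $C_1,C_2$ from \eqref{eq:cosets}. These are exactly the supports of $P_0,P_1,P_2$. The modulus $9$ is not prime, so it falls outside the phrase ``prime minimal moduli''. Therefore the prime minimal moduli are $13$ and $31$, and only $13$ admits a coset support.

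The main obstacle is not arithmetic but scope. I have to read ``subgroup coset'' inside the unit group, so that $0$ is excluded, and apply the density equation only to supports in which each residue occurs once. Both conventions were fixed in the text just before the corollary, so I would cite them explicitly.
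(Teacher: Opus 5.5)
Your proposal is correct and follows essentially the same route as the paper. You reduce the densities $4/9$, $4/13$, $8/31$ to lowest terms so that their denominators divide $M$, then apply Lagrange's theorem in $(\ZZ/31\ZZ)^\times$ (order $30$, with $8\nmid 30$) and contrast this with the order-four cubic-residue subgroup of $(\ZZ/13\ZZ)^\times$; your check that $C_0$ is that subgroup with cosets $C_1,C_2$, and your remark that $9$ is not prime, make explicit what the paper's short proof leaves implicit.
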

\begin{proof}
The three densities are in lowest terms, so their denominators divide
$M$. A subgroup coset in $(\ZZ/31\ZZ)^\times$ has size dividing $30$,
and $8\nmid30$. At $13$, the subgroup of cubic residues has order
$(13-1)/3=4$, as required.
\end{proof}
This does not exclude an arbitrary eight-residue product modulo $31$,
a product at a larger modulus, a quotient with other multiplicities, or
modularity without such a product. It also does not force the three
specific level-$13$ supports: density alone allows fifteen symmetric
four-element supports. Their exact Fricke closure is the additional
input supplied by \cite[Theorem~1.1]{FrickeCompanion}.

There is a documented reason why the index-$13$ example is absent from
one influential search: Mizuno's rank-two search used $1\le d_i\le10$
\cite[Section~4]{Mizuno}. Moreover, that search tests individual Nahm
sums, whereas the present conjectures use two constituents. These are
specific differences in search scope; they do not establish that earlier
methods could never discover the identities or that every remaining
possibility has been exhausted.

The higher asymptotic equations of Section~\ref{sec:higher} provide the
next test for the surviving index-$31$ datum. A rigorously nonzero
coefficient excludes a specified choice of linear terms. To exclude an
unbounded family requires exact elimination of those parameters or
certified bounds covering all of them. A finite numerical search, however
large, supplies only a bounded negative result.
\section{Candidate sums, exact dependencies, and asymptotic consistency}
\label{sec:conjectures}
For the quadratic form obtained in Section~\ref{sec:framework}, define
\begin{equation}\label{eq:Fdef}
 F_{a,b}(q)=\sum_{r,s\geq0}
 \frac{q^{2r^2+13rs+26s^2+ar+bs}}{(q;q)_r(q^{13};q^{13})_s}.
\end{equation}
We compare these sums with the products $P_0,P_1,P_2$ in
\eqref{eq:products}. The index $(1,13)$ records the two denominator steps.
The following series converge absolutely for $|q|<1$.
\begin{conjecture}\label{conj:products}
For $|q|<1$,
\begin{align}
 F_{0,0}+qF_{2,13}&=P_0,\label{eq:conj0}\\
 F_{0,0}+q^6F_{7,26}&=P_1,\label{eq:conj1}\\
 F_{4,13}+q^4F_{6,26}&=P_2.\label{eq:conj2}
\end{align}
\end{conjecture}
For example, \eqref{eq:conj0} is the single displayed expression
\[
 \sum_{r,s\geq0}
 \frac{q^{2r^2+13rs+26s^2}(1+q^{2r+13s+1})}
 {(q;q)_r(q^{13};q^{13})_s}=P_0(q).
\]
The corresponding numerator factors for \eqref{eq:conj1} and
\eqref{eq:conj2} are $1+q^{7r+26s+6}$ and
$q^{4r+13s}(1+q^{2r+13s+4})$. These are assertions about sums of two
constituents; they do not assert that any individual $F_{a,b}$ is modular.

\begin{proposition}\label{prop:dependencies}
Without assuming Conjecture~\ref{conj:products}, one has
\begin{equation}\label{eq:sumdependency}
 F_{2,13}-F_{4,13}=q^4F_{6,26}+q^5F_{7,26}.
\end{equation}
Together with the product relation in \eqref{eq:theta}, this shows
that any two assertions in the conjecture imply the third.
\end{proposition}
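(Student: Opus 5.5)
The identity \eqref{eq:sumdependency} should follow from the elementary first-order $q$-difference relation for $F_{a,b}$ in the variable $r$, applied twice. I do not expect to need any modular input, and none of Conjecture~\ref{conj:products}.

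First I will derive the shift relation. Write $Q(r,s)=2r^2+13rs+26s^2$. Then
\[
 F_{a,b}-F_{a+1,b}=\sum_{r,s\geq0}\frac{q^{Q(r,s)+ar+bs}(1-q^r)}{(q;q)_r(q^{13};q^{13})_s}.
\]
The $r=0$ terms vanish, and $(1-q^r)/(q;q)_r=1/(q;q)_{r-1}$ for $r\geq1$. Replacing $r$ by $r+1$ and using $Q(r+1,s)=Q(r,s)+4r+13s+2$ gives
\[
 F_{a,b}-F_{a+1,b}=q^{a+2}F_{a+4,\,b+13}.
\]
All rearrangements are justified by absolute convergence for $|q|<1$, as already noted before the conjecture.

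Next I will telescope:
\[
 F_{2,13}-F_{4,13}=(F_{2,13}-F_{3,13})+(F_{3,13}-F_{4,13}).
\]
The shift relation with $(a,b)=(2,13)$ gives $q^4F_{6,26}$, and with $(a,b)=(3,13)$ it gives $q^5F_{7,26}$. This is exactly \eqref{eq:sumdependency}.

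For the implication, I will consider the combination $\eqref{eq:conj0}-\eqref{eq:conj1}-q\,\eqref{eq:conj2}$. Its left side is
\[
 qF_{2,13}-q^6F_{7,26}-qF_{4,13}-q^5F_{6,26},
\]
which vanishes by \eqref{eq:sumdependency} multiplied by $q$. Its right side is $P_0-P_1-qP_2$, which vanishes by \eqref{eq:theta}. Thus the three conjectured equalities satisfy one linear relation with coefficients $1,-1,-q$, all nonzero as formal series. Hence any two of them imply the third; for \eqref{eq:conj2} this uses division by $q\neq0$.

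I do not see a genuine obstacle here. The only care needed is the index bookkeeping in the shift $r\mapsto r+1$, that is, checking that the exponent increment is $4r+13s+2$, and confirming that no $r=0$ boundary term survives.
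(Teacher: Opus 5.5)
Your proof is correct and is essentially the paper's argument. The paper treats $F_{2,13}-F_{4,13}$ in one step, using $(1-q^{2r})/(q;q)_r=(1+q^r)/(q;q)_{r-1}$ and the shift $r\mapsto r+1$; this is the same computation as your general relation $F_{a,b}-F_{a+1,b}=q^{a+2}F_{a+4,b+13}$ applied twice and telescoped through $F_{3,13}$. Your deduction that any two identities imply the third also matches the paper: the combination with coefficients $1,-1,-q$ vanishes on the sum sides, and it vanishes on the product sides by $P_0=P_1+qP_2$.
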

\begin{proof}
Write $Q(r,s)=2r^2+13rs+26s^2$. Then
\[
 F_{2,13}-F_{4,13}=
 \sum_{r,s\geq0}\frac{q^{Q(r,s)+2r+13s}(1-q^{2r})}
 {(q;q)_r(q^{13};q^{13})_s}.
\]
The $r=0$ terms vanish. Use
$(1-q^{2r})/(q;q)_r=(1+q^r)/(q;q)_{r-1}$ and replace $r$ by $r+1$.
The new numerator is
$q^{Q(r,s)+6r+26s+4}(1+q^{r+1})$, proving
\eqref{eq:sumdependency}. Equation~\eqref{eq:theta} gives $P_0=P_1+qP_2$.
Multiplying the sum-side relation by $q$ gives the same linear dependence
among the conjectural sum sides as among the products.
\end{proof}

\subsection{Exact finite evidence}\label{sec:finite}
The first 801 coefficients of each pair in
\eqref{eq:conj0}--\eqref{eq:conj2} agree, through $q^{800}$ inclusive.
The comparison uses exact integer arithmetic and direct expansions of both
sides. For all five nonnegative linear-term pairs, a summand contributing
through degree $N$ must satisfy
\[
 2r^2+13rs+26s^2\leq N,\qquad
 r\leq\lfloor\sqrt{N/2}\rfloor,\quad
 s\leq\lfloor\sqrt{N/26}\rfloor.
\]
If $p_r(n)=[q^n](q;q)_r^{-1}$, then
\[
 p_0(n)=\begin{cases}1,&n=0,\\0,&n>0,\end{cases}\qquad
 p_r(n)=p_{r-1}(n)+p_r(n-r),
\]
with zero values at negative indices. The second denominator contributes
$\sum_{j\geq0}p_s(j)q^{13j}$. Convolving these finite arrays and applying
the numerator shifts gives the sum coefficients. Independently, multiplying
by $(1-q^n)^{-1}$ for each permitted $n\leq N$ gives the product
coefficients, using the ascending update $a_d\leftarrow a_d+a_{d-n}$.
These recurrences specify the entire finite computation without assuming a
common recurrence for the conjecturally equal series.

These finite comparisons support the conjectures but do not prove them.
The earlier bounded searches described in the preliminary long draft
also failed to find a single constituent representing one of the products.
No global minimality of the number of summands is asserted. The present
paper separates the finite cubic exclusions of Section~\ref{sec:sieve}
from the individual-sum quadratic obstructions of
Theorem~\ref{thm:quadratic-obstruction}.

\subsection{Exact leading amplitudes}\label{sec:amplitudes}
The saddle calculation also recovers all three Fricke amplitudes,
independently of the finite coefficient comparisons.
\begin{proposition}\label{prop:amplitudes}
For fixed real $a,b$, as $\varepsilon\downarrow0$,
\begin{equation}\label{eq:sumamplitude}
 F_{a,b}(e^{-\varepsilon})\sim
 \kappa\rho^a\sigma^{b/13}e^{\Lambda/\varepsilon},\qquad
 \kappa=\frac{\rho^2(\alpha+1)}{\sqrt{13}},\quad
 \Lambda=\frac{2\pi^2}{39}.
\end{equation}
The three sum sides in \eqref{eq:conj0}--\eqref{eq:conj2} have respective
leading amplitudes $(\alpha,\beta,\gamma)/\sqrt{13}$, agreeing exactly
with the product sides in Proposition~\ref{prop:productasy}.
\end{proposition}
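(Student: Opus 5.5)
The plan is a standard Laplace (saddle-point) evaluation of the double sum, followed by exact algebra in $K=\QQ(\rho)$. Put $q=e^{-\varepsilon}$ and write $r=t_1/\varepsilon$, $s=t_2/(13\varepsilon)$. Use the Euler--Maclaurin asymptotics
\[
 \log\frac1{(q^d;q^d)_n}=\frac{\Li_2(1)-\Li_2(e^{-d n\varepsilon})}{d\varepsilon}+\frac12\log\frac{1-e^{-dn\varepsilon}}{2\pi d\varepsilon}+O(\varepsilon),
\]
uniformly for $n\varepsilon$ in compact subsets of $(0,\infty)$. The summand then becomes
\[
 \exp\bigl(\mathcal S(\mathbf t)/\varepsilon\bigr)\,e^{-at_1-bt_2/13}\,(\text{half-log corrections}).
\]
Here $\mathcal S$ is the action of Section~\ref{sec:radial-action}. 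Its unique maximum is at $e^{-t_1}=\rho$, $e^{-t_2}=\sigma$, by \eqref{eq:complements} and the strict concavity supplied by positive definiteness of $AD$ in \eqref{eq:cubicmatrix}. Evaluating the action at this point gives $\Lambda=2\pi^2/39$ by \eqref{eq:cubiclambda}.

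The linear terms contribute the factor $e^{-at_1-bt_2/13}=\rho^a\sigma^{b/13}$. This explains the $a,b$ dependence, and it holds for fixed real $a,b$. Contributions far from the saddle are exponentially smaller. I will justify this by bounding the summand with $e^{(\Lambda-\delta)/\varepsilon}$ outside a neighbourhood, using the convexity of the exponent. The endpoints $r=0$ or $s=0$ lie at a strictly lower action, since the boundary maximum of $\mathcal S$ is smaller.

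The Gaussian integral over the lattice with spacings $\varepsilon$ and $13\varepsilon$ (Jacobian $1/(13\varepsilon^2)$) gives
\[
 \kappa=\frac{1}{13}\cdot\frac{2\pi\varepsilon}{\sqrt{\det\mathcal H}}\cdot\frac{1}{2\pi\varepsilon}\cdot\frac{\sqrt{(1-\rho)(1-\sigma)}}{\sqrt{13}}\cdot\frac{1}{\varepsilon},
\]
up to bookkeeping of the $\varepsilon$ powers, which must cancel. Here $\mathcal H$ is the Hessian of $-\mathcal S$ at the saddle. The Hessian equals $D^{-1}A+\operatorname{diag}\bigl(z_i/((1-z_i)d_i)\bigr)$. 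Equivalently, I will use the standard closed form for Nahm sums:
\[
 \kappa^{-2}=\det\bigl(AD\cdot\operatorname{diag}(1-z_i)^{-1}\ldots\bigr),
\]
which is more simply $\kappa^2=\prod_i(1-z_i)^{-1}\det\bigl(A+\operatorname{diag}(z_i/(1-z_i))\bigr)^{-1}$, with the appropriate powers of $d_i$. I will derive the exact constant directly, taking care with the factor $13$ from $d_2$.

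Next comes the exact algebra. Substituting $1-\rho=\rho^4\sigma$ and $1-\sigma=\rho^{13}\sigma^4$ expresses $\kappa^2$ as an element of $K$. The claim is that this element equals $\rho^4(\alpha+1)^2/13$. Using \eqref{eq:reduction}, the expressions for $\alpha,\beta,\gamma$ in $e=1-\rho$, and $f(\rho)=0$, this reduces to a polynomial identity modulo $f$. Since both sides are positive in the real embedding, the square root is determined. I expect this step to be the main obstacle. One has to get the normalization of the lattice Gaussian sum exactly right, including the second step $13$ and the $\sqrt{1-z_i}$ factors, and then find the clean form $\rho^2(\alpha+1)$ in $K$. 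A numerical check with $\rho\approx0.72611$ will guard against a wrong constant before the exact reduction is done.

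Finally, for the three sum sides, each summand has the same $e^{\Lambda/\varepsilon}$, and $q^k\to1$, so the amplitudes add. For \eqref{eq:conj0} the amplitude is $\kappa(1+\rho^2\sigma)$. For \eqref{eq:conj1} it is $\kappa(1+\rho^7\sigma^2)$. For \eqref{eq:conj2} it is $\kappa(\rho^4\sigma+\rho^6\sigma^2)$. Each claim then reduces to an identity in $K$, for example $\rho^2(\alpha+1)(1+\rho^2\sigma)=\alpha$. I will verify these by reduction with \eqref{eq:reduction} and \eqref{eq:periodrelations}. Only two of the three need checking: the linear relation of Proposition~\ref{prop:dependencies}, together with $\alpha=\beta+\gamma$, gives the third. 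Comparison with Proposition~\ref{prop:productasy} then completes the proof.
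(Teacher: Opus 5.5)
Your route is the paper's: Gaussian prefactor at the saddle $(\rho,\sigma)$, the factor $\rho^a\sigma^{b/13}$ from the linear terms, reduction of $\kappa^2$ modulo $f$, then the three amplitude identities. Your shortcut for the third amplitude is valid. Writing $\Sigma_j$ for the sum side of the $j$th conjectured identity, Proposition~\ref{prop:dependencies} gives $\Sigma_0=\Sigma_1+q\Sigma_2$ exactly, and $\alpha=\beta+\gamma$ then settles the third case. The paper instead cites \cite[Section~2]{Mizuno} for the discrete Laplace expansion rather than rederiving it, and checks all three amplitude identities directly.

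There is one concrete slip to fix before your direct derivation of $\kappa$: the half-log term in your Euler--Maclaurin formula is inverted. Since $1/(q;q)_\infty\sim\sqrt{\varepsilon/(2\pi)}\,e^{\pi^2/(6\varepsilon)}$, the correct expansion is
\[
 \log\frac1{(q^d;q^d)_n}=\frac{\Li_2(1)-\Li_2(e^{-dn\varepsilon})}{d\varepsilon}
 +\frac12\log\frac{d\varepsilon}{2\pi(1-e^{-dn\varepsilon})}+O(\varepsilon).
\]
Put $\Xi=\operatorname{diag}(\rho/(1-\rho),\sigma/(1-\sigma))$ and $H=D^{-1}(A+\Xi)$, so $\det H=\det(A+\Xi)/13$. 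Three factors then combine: the lattice Jacobian $1/(13\varepsilon^2)$, the Gaussian factor $2\pi\varepsilon/\sqrt{\det H}$, and $\sqrt{13}\,\varepsilon/\bigl(2\pi\sqrt{(1-\rho)(1-\sigma)}\bigr)$ from the corrected half-log terms. All powers of $\varepsilon$ and of $13$ cancel, leaving
\[
 \kappa^{-2}=(1-\rho)(1-\sigma)\det(A+\Xi)=3+\rho+\sigma-4\rho\sigma .
\]
Your second ``closed form'' is therefore right, and it needs no extra powers of $d_i$. Your first displayed expression is not equivalent to it: it puts $\sqrt{(1-\rho)(1-\sigma)}$ in the numerator. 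Even after discarding the $\varepsilon$'s, it equals the correct value multiplied by $(1-\rho)(1-\sigma)/13$. The exact identity you then need is $(3+\rho+\sigma-4\rho\sigma)\rho^4(\alpha+1)^2=13$ in $K$. This is what the paper verifies by reduction modulo $f$, using $\alpha=7\rho-2\rho^2$ and \eqref{eq:reduction}.
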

\begin{proof}
Put $\Xi=\operatorname{diag}(\rho/(1-\rho),\sigma/(1-\sigma))$.
The negative Hessian of the action in Section~\ref{sec:radial-action}
is $H=D^{-1}(A+\Xi)$. The Gaussian prefactor in the Nahm-sum
expansion \cite[Section~2]{Mizuno} is
\[
 \kappa=\frac{1}{\sqrt{13\det(H)(1-\rho)(1-\sigma)}}
 =\frac{1}{\sqrt{3+\rho+\sigma-4\rho\sigma}}.
\]
The linear term contributes $\rho^a\sigma^{b/13}$. With
$\alpha=7\rho-2\rho^2$ and
$\sigma=41-66\rho+15\rho^2$, reduction modulo $f$ gives
\[
 (3+\rho+\sigma-4\rho\sigma)\rho^4(\alpha+1)^2=13.
\]
All factors specifying the positive square root are positive, which
proves \eqref{eq:sumamplitude}.

Since $q^k\to1$, the three sum-side amplitudes are
\[
 \kappa\bigl(1+\rho^2\sigma,\;
 1+\rho^7\sigma^2,\;
 \rho^4\sigma+\rho^6\sigma^2\bigr).
\]
Using $\rho^4\sigma=1-\rho$, their identification with the Fricke
amplitudes reduces to
\begin{align*}
 (\alpha+1)(1-\rho+\rho^2)&=\alpha,\\
 (\alpha+1)\rho(1-\rho+\rho^2)&=\beta,\\
 (\alpha+1)(1-\rho)(1-\rho+\rho^2)&=\gamma.
\end{align*}
The first difference is exactly $(1-2\rho)f(\rho)=0$; the other two
follow from the matrix ratios \eqref{eq:matrixratios}.
\end{proof}
Thus both the exponential growth and the leading constants agree.
This is an exact asymptotic consistency result; it does not establish
the infinite sum--product identities.

\section{Mixed signatures and the limits of the arithmetic restriction}
\label{sec:signature}
The field signature and the denominator steps are different data. A
number field of signature $(1,n)$ has degree $1+2n$, whereas an index
$(1,m)$ specifies the denominator factors $(q;q)_r(q^m;q^m)_s$.
A quartic field with a real embedding has signature $(4,0)$ or $(2,1)$,
so it is not of signature $(1,n)$.

\subsection{A positive Nahm family from Lewin's complex cubic identity}
Let $\eta\in(0,1)$ satisfy $\eta^3+\eta^2=1$. Its field has signature
$(1,1)$, and Lewin's identity is
\begin{equation}\label{eq:lewin-synthesis}
 2L(\eta)+L(\eta^2)=\frac{\pi^2}{3}.
\end{equation}
For completeness, the five-term relation at $x=y=\eta$ gives
$2L(\eta)=L(\eta^2)+2L(\eta/(1+\eta))$; use
$\eta/(1+\eta)=\eta^3=1-\eta^2$ and reflection. Also
$1-\eta=\eta^5$ and $1-\eta^2=\eta^3$.

\begin{proposition}\label{prop:lewin-family}
For every rational $b$ with $0<b<15/16$, set
\[
 A_b=\begin{pmatrix}5-2b&b\\2b&3/2-b\end{pmatrix},\qquad
 D=\operatorname{diag}(1,2).
\]
Then $A_bD$ is positive definite with positive cross term, its positive
Nahm solution is $(\eta,\eta^2)$, and its radial action is $\pi^2/12$.
In particular, $b=1/2$ gives the convergent family
\begin{equation}\label{eq:lewin-sums}
 \sum_{r,s\geq0}
 \frac{q^{2r^2+rs+s^2+\ell_1r+\ell_2s}}
 {(q;q)_r(q^2;q^2)_s}.
\end{equation}
The proposition asserts no modularity or product evaluation for these sums.
\end{proposition}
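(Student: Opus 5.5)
The plan is to verify the three assertions of Proposition~\ref{prop:lewin-family} directly from the matrix, using the conventions of Section~\ref{sec:framework}. First I compute
\[
 A_bD=\begin{pmatrix}5-2b&2b\\2b&3-2b\end{pmatrix},
\]
which is symmetric, so $A_b$ is symmetrizable with $D=\operatorname{diag}(1,2)$. Its cross term $2b$ is positive for $b>0$. By Sylvester's criterion, positive definiteness needs $5-2b>0$, which holds since $b<15/16$, and
\[
 \det(A_bD)=(5-2b)(3-2b)-4b^2=15-16b>0 .
\]
The quadratic terms in $b$ cancel, so this is exactly the condition $b<15/16$. This explains the stated range.

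Next I check the Nahm equations \eqref{eq:nahmequations} at $(z_1,z_2)=(\eta,\eta^2)$, using the relations $1-\eta=\eta^5$ and $1-\eta^2=\eta^3$ recorded just before the proposition. The first row gives $\eta^{5-2b}(\eta^2)^b=\eta^5=1-\eta$. The second row gives $\eta^{2b}(\eta^2)^{3/2-b}=\eta^3=1-\eta^2$. Both hold for every $b$, since the $b$-dependence cancels in the exponent of $\eta$; fractional powers are taken in the positive real embedding.

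To justify calling $(\eta,\eta^2)$ \emph{the} positive solution, I would argue through the action $\mathcal S(\mathbf t)$ of Section~\ref{sec:radial-action}. Each term $\pi^2/6-\Li_2(e^{-t})$ has second derivative $-e^{-t}/(1-e^{-t})<0$. The form $-\tfrac12\mathbf t^TD^{-1}A_b\mathbf t$ is negative definite, because $D^{-1}A_b=D^{-1}(A_bD)D^{-1}$. Hence $\mathcal S$ is strictly concave on $(0,\infty)^2$, and it has at most one critical point. Critical points correspond exactly to positive Nahm solutions, so the solution is unique. This uniqueness step is the only point needing an argument beyond substitution, and I expect it to be the main obstacle, though a mild one.

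For the radial action I use \eqref{eq:lambda} with $d_1=1$, $d_2=2$:
\[
 \Lambda=L(1)-L(\eta)+\tfrac12\bigl(L(1)-L(\eta^2)\bigr)=\tfrac{\pi^2}{4}-\tfrac12\bigl(2L(\eta)+L(\eta^2)\bigr).
\]
Lewin's identity \eqref{eq:lewin-synthesis} then gives $\Lambda=\pi^2/4-\pi^2/6=\pi^2/12$, independent of $b$. Finally, I set $b=1/2$. This gives $A_{1/2}D=\left(\begin{smallmatrix}4&1\\1&2\end{smallmatrix}\right)$, whose half-form is $2r^2+rs+s^2$, matching \eqref{eq:lewin-sums}. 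Convergence for $|q|<1$ follows because the exponent grows at least like a positive multiple of $r^2+s^2$, which dominates any fixed linear term $\ell_1r+\ell_2s$, while the denominators are bounded below termwise on compact subsets of the disc.
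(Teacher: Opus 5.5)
Your proposal is correct and follows the paper's proof. The paper also substitutes $(\eta,\eta^2)$ into the Nahm equations, applies Sylvester's criterion with $\det(A_bD)=15-16b$, uses positive definiteness for uniqueness and convergence, and evaluates $\Lambda$ via Lewin's identity; your strict-concavity argument for $\mathcal S$ just spells out the uniqueness step that the paper asserts in one line.
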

\begin{proof}
The two Nahm equations reduce to
$1-\eta=\eta^{5-2b}(\eta^2)^b=\eta^5$ and
$1-\eta^2=\eta^{2b}(\eta^2)^{3/2-b}=\eta^3$.
The leading entry of $A_bD$ is $5-2b>0$, and its determinant is
$15-16b>0$. Positive definiteness gives uniqueness of the positive
solution and convergence for every fixed rational linear term.
Equation~\eqref{eq:weightedlambda}, with $m=2$ and $c_0=1/3$,
gives $\Lambda=\pi^2/12$.
\end{proof}
The multiplicative dependence of $\eta$ and $\eta^2$ explains why a
single Rogers identity here allows an entire rational family of matrices.
The uniqueness clause of Proposition~\ref{prop:monomial-matrix} explicitly
requires independence. Thus finiteness of a list of dilogarithm values,
without hypotheses on its arguments, need not imply finiteness of matrices.

\subsection{What the nonreal embeddings actually require}
For a single modular symmetrizable Nahm sum, Mizuno's theorem
\cite[Theorem~3.3]{Mizuno} makes the associated weighted Bloch element
torsion. For index $(1,m)$, after clearing the weight denominator, that
element is
\[
 \xi=m[x]+[y].
\]
Consequently every complex embedding $\sigma$ must satisfy
\begin{equation}\label{eq:regulator-test}
 m\,\mathscr D(\sigma x)+\mathscr D(\sigma y)=0,
\end{equation}
where $\mathscr D$ is the Bloch--Wigner dilogarithm. For a number field
the rationalized Bloch group has rank equal to the number of complex
places; its real regulator is injective after tensoring with $\RR$
\cite{ZagierDilog}. In a totally real field the rationalized group
vanishes. In a mixed-signature field the regulator equations are
additional restrictions. Nonreal conjugates do not alone contradict them:
the certificate for \eqref{eq:lewin-synthesis} gives a torsion relation
even in the complex cubic field.

The companion cubic classification \cite{CubicClassification} excludes
five-term certified relations with $m>2$ between exceptional units in
signature $(1,1)$. It does not establish that arbitrary saddle arguments
in mixed-signature fields must be exceptional units, nor does it classify
degree-five and higher fields of signature $(1,n)$. Known quartic
two-term evaluations with other coefficient patterns also prevent a
blanket degree restriction; examples occur in Bytsko's discussion
\cite[Section~3]{Bytsko}. The question for a new higher-degree identity
with the specific positive weighting $m:1$, $m>2$, therefore remains
separate from the cubic theorem.

Finally, a product identity for $F_1+q^kF_2$ does not imply modularity of
either constituent. We do not invoke the single-sum torsion theorem for
such a combination without an additional argument. The common-saddle
density condition \eqref{eq:density-sieve} remains necessary for positive
combinations, as do the higher asymptotic conditions applied to the
combination itself. A classification of these combinations would need
to justify any further Bloch-group implication explicitly.
\appendix
\section{A five-term proof of Theorem~\ref{thm:dilog}}
\label{app:certificate}\label{sec:certificate}
We give the full certificate for Theorem~\ref{thm:dilog}. Euler reflection
and the real Rogers five-term identity are
\begin{align}
 L(x)+L(1-x)&=L(1),\label{eq:reflection}\\
 L(x)+L(y)&=L(xy)+L\!\left(\frac{x(1-y)}{1-xy}\right)
 +L\!\left(\frac{y(1-x)}{1-xy}\right),\quad 0<x,y<1.
 \label{eq:fiveterm}
\end{align}
For a formal symbol $[x]$, put
\begin{align*}
 \FT(x,y)&=[x]+[y]-[xy]
 -\left[\frac{x(1-y)}{1-xy}\right]
 -\left[\frac{y(1-x)}{1-xy}\right],\\
 R(x)&=[x]+[1-x]-[C],
\end{align*}
where $[C]$ is a distinguished formal constant sent to $L(1)$.
Recall $w=1-\rho$, and set $u_1=1+\rho$ and $u_2=(1+w)/w$.
The arrays in Tables~\ref{tab:five} and~\ref{tab:reflection} are interpreted
in $K$, using the real embedding \eqref{eq:rhointerval}.

\begin{table}[H]
\begin{minipage}[t]{0.57\textwidth}
\caption{Five-term arrays.}\label{tab:five}
\centering\small
\renewcommand{\arraystretch}{1.18}
\begin{tabular}{@{}rrcc@{}}\toprule
$i$&$c_i$&$x_i$&$y_i$\\\midrule
1&$-2$&$w$&$w$\\
2&$2$&$u_1^{-1}$&$wu_1$\\
3&$2$&$\rho^{-1}u_2^{-1}$&$w^2u_2$\\
4&$-1$&$u_1^{-1}$&$u_2^{-1}$\\
5&$1$&$u_2/5$&$\rho$\\
6&$-1$&$u_2^{-1}$&$u_2^{-1}$\\
7&$-1$&$wu_1^{-1}u_2$&$wu_2^{-1}$\\
8&$2$&$\rho u_1^{-1}$&$\rho u_1^{-1}$\\
9&$1$&$\rho^{-1}u_2^{-1}$&$\rho^{-1}w^2$\\
10&$1$&$\rho^{-1}w$&$\rho wu_1^{-1}$\\
11&$1$&$\rho^{-1}u_2^{-1}$&$\rho u_2^{-1}$\\
12&$1$&$\rho^{-1}wu_1$&$\rho u_2/5$\\
13&$-1$&$u_1^{-1}u_2^{-1}$&$wu_1$\\
14&$1$&$\rho^{-4}w$&$\rho^2$\\\bottomrule
\end{tabular}
\end{minipage}\hfill
\begin{minipage}[t]{0.40\textwidth}
\caption{Reflection arrays.}\label{tab:reflection}
\centering\small
\renewcommand{\arraystretch}{1.18}
\begin{tabular}{@{}rrc@{}}\toprule
$i$&$d_i$&$z_i$\\\midrule
1&$1$&$\rho^{-3}u_1u_2^{-1}$\\
2&$-1$&$\rho^{-1}wu_1$\\
3&$1$&$w^{-1}u_2^{-1}$\\
4&$-1$&$u_1^{-1}$\\
5&$-1$&$u_2/5$\\
6&$1$&$wu_1^{-1}u_2$\\
7&$11$&$w$\\
8&$-1$&$wu_1$\\\bottomrule
\end{tabular}
\end{minipage}
\end{table}

\begin{proof}[Proof of Theorem~\ref{thm:dilog}]
Expanding the displayed arrays and reducing their arguments modulo
$t^3-4t^2+t+1$ gives the formal identity
\begin{equation}\label{eq:certificate}
 \sum_{i=1}^{14}c_i\FT(x_i,y_i)+\sum_{i=1}^{8}d_iR(z_i)
 =13[\rho]+[\sigma]-10[C].
\end{equation}
This is equality of formal linear combinations after equal field elements
are identified. Each argument has a unique representative
$a+b\rho+c\rho^2$ with rational coefficients. The rules
$\rho^3=4\rho^2-\rho-1$ and \eqref{eq:reduction} suffice to verify every
cancellation, leaving just the three symbols on the right.

All 36 primary arguments in the two tables are in $(0,1)$.
More explicitly, evaluate each reduced quadratic representative on the
rational interval \eqref{eq:rhointerval} by interval arithmetic. Every
resulting interval is contained in $(58/1000,986/1000)$.
Consequently \eqref{eq:reflection} and \eqref{eq:fiveterm} apply to every
array without analytic continuation. Sending $[x]$ to $L(x)$ and $[C]$ to
$\pi^2/6$ in \eqref{eq:certificate} proves the theorem.
\end{proof}
The argument $u_2/5$ is intentional: it equals
$(3+3\rho-\rho^2)/5$, has value $0.9302186817\ldots$, and satisfies
$25X^3-35X^2+12X-1=0$. The certificate is specified entirely by the tables
and the cubic reduction rule.

\section{Exact first-correction obstructions for the quadratic systems}
\label{app:quadratic-obstruction}
For this appendix, let $F_{A,m;a,b}$ denote the sum in
Theorem~\ref{thm:quadratic-obstruction} before multiplication by $q^c$.
The symmetrizable Nahm-sum expansion \cite[Section~2]{Mizuno}, with
the modularity conditions of \cite{VlasenkoZwegers}, gives
\begin{equation}\label{eq:qob-expansion}
 F_{A,m;a,b}(e^{-\varepsilon})
 =K e^{\Lambda/\varepsilon}
 \bigl(1+\beta(a,b)\varepsilon+O(\varepsilon^2)\bigr),\qquad K>0.
\end{equation}
If $q^cF$ is modular of weight zero, its Fourier expansion at the cusp
zero has only exponentially small corrections to its leading term.
Consequently \eqref{eq:qob-expansion} requires $\beta(a,b)=c\in\QQ$.

Exact evaluation of the Gaussian formula in Appendix~\ref{app:quad-formula}
gives the following irrational parts; the omitted part of each $\beta$
is rational for rational $a,b$:
\begin{align*}
 \beta_{A_8}&\in\QQ+\frac{\sqrt3}{72}P_8,&
 \beta_{A_8^{-1}}&\in\QQ+\frac{\sqrt3}{72}P_8^*,\\
 \beta_{A_6}&\in\QQ+\frac{\sqrt2}{8232}P_6,&
 \beta_{A_6^{-1}}&\in\QQ+\frac{\sqrt2}{8232}P_6^*.
\end{align*}
Here the four polynomials are
\begin{align}
 P_8&=7+6a-b-14a^2+10ab-2b^2,\label{eq:qob-p8}\\
 P_8^*&=-7-16a-3b+32a^2+8ab+2b^2,\label{eq:qob-p8dual}\\
 P_6&=820-1680a+336b+2940a^2-1176ab+49b^2,\label{eq:qob-p6a}\\
 P_6^*&=-820+1512a+168b-1764a^2+294ab+245b^2.\label{eq:qob-p6dual}
\end{align}
\begin{proof}[Proof of Theorem~\ref{thm:quadratic-obstruction}]
For $A_8$, rationality of $\beta$ requires $P_8=0$, equivalently
\[
 (6a-7)^2+3(4b-10a+1)^2=220.
\]
For $A_8^{-1}$ the corresponding equation is
\[
 (24a-5)^2+3(8a+4b-3)^2=220.
\]
Neither has a rational solution. Indeed, clearing denominators would give
coprime integers $X,Y,Z$ with $X^2+3Y^2=220Z^2$. Modulo five,
$-3$ is a nonsquare, so $5\mid X,Y$. The equation then forces $5\mid Z$,
contradicting coprimality.

For $A_6$, $P_6=0$ is equivalent to
\[
 (7b-84a+24)^2-84(7a-2)^2=-580,
\]
or $X^2-21Y^2=-145$ with rational $X,Y$. For $A_6^{-1}$, $P_6^*=0$
is equivalent to
\[
 (35b+21a+12)^2-21(21a-8)^2=2900.
\]
Neither norm equation has a rational solution. Clearing denominators
gives $X^2-21Y^2=NZ^2$ with $N=-145$ or $2900$, both congruent to two
modulo three. Reduction modulo three forces $3\mid X,Z$; the equation
then forces $3\mid Y$, again contradicting coprimality.
Thus $\beta$ is irrational for every rational pair $(a,b)$ in all four
cases, which excludes the required normalization.
\end{proof}

\subsection{The first correction from the saddle}\label{app:quad-formula}
This section specifies the finite algebra behind
\eqref{eq:qob-p8}--\eqref{eq:qob-p6dual}. Let $d=(1,m)$, let $z$ be the positive
saddle, and put
\[
 w_i=\frac{z_i}{1-z_i},\qquad
 H=D^{-1}\bigl(A+\operatorname{diag}(w_1,w_2)\bigr),\qquad C=H^{-1}.
\]
The matrix $H$ is symmetric positive definite. Define
\begin{align*}
 g_i&=-\frac{b_i}{d_i}-\frac{w_i}{2},&
 h_i&=\frac{w_i(1+w_i)}2,\\
 T_i&=\frac{w_i(1+w_i)}{d_i},&
 V_i&=-\frac{w_i(1+w_i)(1+2w_i)}{d_i},\qquad (b_1,b_2)=(a,b),\\
 c_0&=-\sum_i d_i\left(\frac1{24}+\frac{w_i}{12}\right).
\end{align*}
For a centered Gaussian vector $U$ of covariance $C$, the coefficient is
\begin{equation}\label{eq:qob-beta}
 \beta=c_0+\mathbb E\left[
 \frac12\sum_i h_iU_i^2+\frac1{24}\sum_i V_iU_i^4
 +\frac12\left(\sum_i g_iU_i+\frac16\sum_i T_iU_i^3\right)^2
 \right].
\end{equation}
Only the moments
\[
 \mathbb E(U_iU_j)=C_{ij},\quad \mathbb E(U_i^4)=3C_{ii}^2,\quad
 \mathbb E(U_iU_j^3)=3C_{ij}C_{jj},
\]
and
\[
 \mathbb E(U_i^3U_j^3)=9C_{ii}C_{jj}C_{ij}+6C_{ij}^3
\]
are needed. Thus \eqref{eq:qob-beta} is an explicit quadratic polynomial in
$a,b$, evaluated by rational arithmetic in the indicated quadratic field.
Use
\[
 z^{(8)}=\left(\sqrt3-1,\frac{2+\sqrt3}{4}\right),\qquad
 z^{(6)}=\left(2-\sqrt2,\frac{2+\sqrt2}{4}\right),
\]
and their complements for the inverse matrices.

For the action $\mathcal S$ of Section~\ref{sec:radial-action}, the
amplitude is
$\exp(-\sum_i b_it_i/d_i)\prod_i(1-e^{-t_i})^{-1/2}$, and the
order-$\varepsilon$ term in the logarithm of the summand is $c_0$.
At the saddle, the negative Hessian is $H$, the pure third and fourth
phase derivatives are $T_i,V_i$, and the first two logarithmic amplitude
derivatives are $g_i,h_i$. Substituting
$t=t^{(0)}+\sqrt\varepsilon\,U$ and integrating the terms of order
$\varepsilon$ gives \eqref{eq:qob-beta}. The discrete Laplace expansion
is justified by \cite[Section~2]{Mizuno}.

\begin{samepage}
\section*{Statements and Declarations}
No empirical datasets were used. The finite certificate and the exact
recurrences and truncation bounds for the coefficient comparisons are given
in the paper. The underlying Fricke work \cite{FrickeCompanion} used
Claude Fable~5.1 (Anthropic) and ChatGPT with the GPT-5.6 Sol model
(OpenAI), as disclosed there. Additional ChatGPT assistance was used for
the present paper's algebraic calculations, verification, literature
comparison, and revisions. Responsibility for the mathematical statements,
proofs, computations, and references remains with the author. The author
received no external funding and declares no conflict of interest.\par
\end{samepage}

\end{document}